\documentclass{article}

\usepackage{amsmath,amsthm,amssymb,amsfonts}
\usepackage[colorlinks=true,urlcolor=blue,citecolor=blue,linkcolor=blue]{hyperref}

\newtheorem{theorem}{Theorem}[section]
\newtheorem{remark}[theorem]{Remark}
\newtheorem{lemma}[theorem]{Lemma}
\newtheorem{corollary}[theorem]{Corollary}
\newtheorem{assumption}[theorem]{Assumption}
\newtheorem{myalg}{Algorithm}

\renewcommand{\themyalg}{\textbf{\Alph{myalg}}}

\def \R{\mathbb{R}}

\def \N{\mathbb{N}}
\def \eps{\varepsilon}
\def \Uad{{U_{\text{ad}}}}

\newcommand{\dx}{\,\text{\rm{}d}x}

\numberwithin{equation}{section}

\makeatletter
\renewcommand*{\env@matrix}[1][*\c@MaxMatrixCols c]{%
  \hskip -\arraycolsep
  \let\@ifnextchar\new@ifnextchar
  \array{#1}}
\makeatother

\newenvironment{keywords}{\par\addvspace{11pt}\noindent{{\bf{Keywords:}} }\ignorespaces}{\par\addvspace{26pt plus 4pt}}
\newenvironment{classcode}{\par\addvspace{11pt}\noindent{{\bf{AMS Subject Classification:}} }\ignorespaces}{\par\addvspace{26pt plus 4pt}}

\begin{document}

\title{An iterative Bregman regularization method for optimal control problems with inequality constraints}

\author{
Frank P\"orner\footnote{Department of Mathematics,  University of W\"urzburg, Emil-Fischer-Str.\@ 40, 97074 W\"urzburg, Germany, E-mail: frank.poerner@mathematik.uni-wuerzburg.de}
,
Daniel Wachsmuth\footnote{Department of Mathematics, University of W\"urzburg, Emil-Fischer-Str.\@ 40, 97074 W\"urzburg, Germany, E-mail: daniel.wachsmuth@mathematik.uni-wuerzburg.de}
}
\date{}
\maketitle

\begin{abstract}
We study an iterative regularization method of optimal control problems with control constraints.
The regularization method is based on generalized Bregman distances. We provide convergence results
under a combination of a source condition and a regularity condition on the active sets.
We do not assume attainability of the desired state.
Furthermore, a-priori regularization error estimates are obtained.
\end{abstract}

\begin{keywords}
optimal control, Bregman regularization, source condition, regularization error estimates
\end{keywords}

\begin{classcode}
49K20, %
49N45, %
65K10  %
\end{classcode}

\section{Introduction}
In this article we consider optimization problems of the following form
\begin{equation}\label{eq:2_main_problem}\tag{\textbf{P}}
 \begin{split}
    \text{Minimize} &\quad \frac{1}{2}\|Su - z\|_Y^2  \\
    \text{such that} &\quad u_a \leq u \leq u_b \quad \text{a.e. in } \Omega,
\end{split}
\end{equation}
which can be interpreted both as an optimal control problem or as an inverse problem. Here $\Omega \subseteq \R^n$, $n \geq 1$ is a bounded, measurable set, $Y$ a Hilbert space, $z \in Y$ a given function. The operator $S : L^2(\Omega) \to Y$ is linear and continuous. The inequality constraints are prescribed on the set $\Omega$. We assume $u_a,u_b \in L^\infty(\Omega)$.
Here, we have in mind to choose $S$ to be the solution operator of a linear partial differential equation. In many situations
the operator $S$ is compact or has non-closed range, which makes \eqref{eq:2_main_problem} ill-posed.

In an optimal control setting, the unknown $u$ is the control and the constraints are limitations arising from the underlying physical problem, i.e., temperature restriction of a heat source. The function $z$ is the desired state, and we search for $u$ such that $Su$ is as close to $z$ as possible with respect to the norm in $Y$.
Here, the interesting  situation is, when $z$ cannot be reached due to the presence of the control constraints (non-attainability).
If
\eqref{eq:2_main_problem} is interpreted as an inverse problem, the unknown $u$ represents some data to be reconstructed from the measurement $z$. Here the inequality constraints reflect a-priori information of the unknown $u$.

A well-known regularization method is the Tikhonov regularization with some positive regularization parameter $\alpha > 0$. The regularized problem is given by:
\begin{equation*}
 \begin{split}
    \text{Minimize} &\quad \frac{1}{2}\|Su - z\|_Y^2 + \frac{\alpha}{2}\|u\|_{L^2(\Omega)}^2  \\
    \text{such that} &\quad u_a \leq u \leq u_b \quad \text{a.e. in } \Omega.
\end{split}
\end{equation*}
The additional term can be interpreted as control costs. This method is well understood in regard to convergence for $\alpha \to 0$, perturbed data, and numerical approximations, see e.g.,
\cite{engl1996,troelsch2010,wachsmuth2013,wachsmuth2011,wachsmuth2011b}.
However, for $\alpha$ tending to zero the Tikhonov regularized problem becomes increasingly ill-conditioned.

An alternative is  the proximal point algorithm (PPM) introduced by Martinez~\cite{Martinet1970} and developed by Rockafellar \cite{Rockafellar1976}. Given an iterate $u_k$, the next iterate $u_{k+1}$ is determined by solving
\begin{equation*}
 \begin{split}
    \text{Minimize} &\quad \frac{1}{2}\|Su - z\|_Y^2 + \alpha_{k+1}\|u - u_k\|_{L^2(\Omega)}^2  \\
    \text{such that} &\quad u_a \leq u \leq u_b \quad \text{a.e. in } \Omega.
\end{split}
\end{equation*}
Due to the self-canceling effect of the regularization term, there is hope to obtain a convergent method without
the requirement that the regularization parameters $\alpha_k$ tend to zero.
However, in general PPM is not strongly convergent due to the example given by  G\"uler \cite{gueler1991},
which exhibits weakly converging but not strongly converging iterates, see also \cite{kaplan1994stable}.
An application of this method to optimal control problems is investigated in \cite{rotin2004}.
There exists strongly convergent  modifications of PPM, see e.g., \cite{Sabach2009,Sabach2010,Solodov00}.
Here, it is an open question how to transfer these methods to our problem while exploiting its particular structure.

In the inverse problems community this method is known under the name iterated Tikhonov regularization \cite{engl1996,hankegroetsch98}.
Under the attainability assumption, that is, $z$ is in the range of $S$, convergence can be proven. If one assumes in addition a so-called source condition,
then convergence rates can be derived.
While the PPM and thus the iterated Tikhonov method allow to proof beautiful monotonicity properties, we were
not able to show strong convergence under conditions adapted to our situation (control constraints and non-attainability).

In order to overcome this difficulty, we investigated the Bregman iterative regularization technique, where the Hilbert space norm in the regularization term
is replaced by a Bregman distance \cite{Bregman67}.
There, the iterate $u_{k+1}$ is given by the solution of
\begin{equation*}
\text{Minimize} \quad \frac{1}{2}\|Su-z\|_Y^2 + \alpha_{k+1}  D^{\lambda_{k}}(u,u_{k}),
\end{equation*}
where $D^\lambda(u,v) = J(u) - J(v) - (u-v,\lambda)$ is called the (generalized) Bregman distance associated to a regularization function $J$ with subgradient $\lambda \in \partial J(v)$.
This iteration method was used first in \cite{burger2007,osher2005}
applied to an image restoration problem with $J$ being the total variation.
Note that for the special choice $J(u) = \frac{1}{2}\|u\|_{L^2(\Omega)}^2$ the PPM algorithm is obtained.

We choose to incorporate the control constraint into the regularization functional, resulting in
\[
J(u) := \frac{1}{2}\|u\|^2 + I_\Uad(u),
\]
where $\Uad = \{u \in L^2(\Omega): \; u_a \leq u \leq u_b\}$, and $I$ is the indicator function of convex analysis.
While at first sight the incorporation of $I_\Uad$ into the Bregman regularization functional together with the explicit
control constraint $u\in U_{ad}$ seems to be redundant, this choice proved advantageous for the convergence analysis.

In order to prove convergence, in  \cite{burger2007} a source condition is imposed.
Moreover, the analysis there relied heavily on the attainability of $z$.
In this paper, we prove convergence and convergence rates without the attainability assumption.
To do so, the existing proof techniques had to be considerably extended.
Moreover, as argued in \cite{wachsmuth2011} a source condition is unlikely to hold in an optimal control setting if $z$ is not attainable, i.e., there is no feasible $u$ such that $z=Su$. In \cite{wachsmuth2013,wachsmuth2011b} a regularity assumption on the active set is used as suitable substitution of the source condition.
Here, the active set denotes the subset of $\Omega$, where the inequality constraints are active in the solution. However this assumption implies that the control constraints are active everywhere, and situations where the control constraints are inactive on a large part of $\Omega$ are not covered. To overcome this, in \cite{wachsmuth2011} both approaches are combined: A source condition is used on the part of the domain, where the inequality constraints are inactive, and a structural assumptions is used on the active sets.
We will use this combined assumption to prove convergence rates of the Bregman iteration.

In order to formulate the method, a recipe to choose the subgradient $\lambda$ has to be added.
We report on this choice in Section \ref{sec3}.
The convergence of the Bregman method is studied in Section \ref{sec4}.
Convergence rates under the assumption of a source condition are proven in Section \ref{sec:sc}.
The main result of the paper, the convergence under a combined source condition and regularity condition on the active sets
is Theorem \ref{thm:ASC_strong_convergence}, which can be found in Section \ref{sec44}.

\paragraph*{Notation.}
For elements $q \in L^2(\Omega)$, we denote the $L^2$-Norm by $\|q\| := \|q\|_{L^2(\Omega)}$. Furthermore $c$ is a generic constant, which may change from line to line, but is independent from the important variables, e.g. $k$.

\section{Problem setting and preliminary results}
Let $\Omega \subseteq \R^n$, $n\in \N$, be a bounded, measurable domain, $Y$ a Hilbert space, $S: L^2(\Omega) \to Y$ linear and continuous. We are interested in computing a solution to the minimization problem \eqref{eq:2_main_problem}.
Here, we assume $z \in Y$ and $u_a, u_b \in L^\infty(\Omega)$.
The functional to be minimized will be denoted by
\[
H(u) := \frac{1}{2}\|Su - z\|_Y^2
\]
and the set of admissible functions by
\[
\Uad := \{u \in L^2(\Omega): \; u_a \leq u \leq u_b\}.
\]
In addition we assume that $u_a\le u_b$ a.e.\@ on $\Omega$, which ensures that $\Uad$ is non-empty.

\subsection{Existence of solutions and optimality conditions}
\label{sec21}

Existence of solutions can be proven by classical arguments using the direct method of the calculus of variations.

\begin{theorem}\label{thm:existence_solutions}
Under the assumptions listed above the problem \eqref{eq:2_main_problem} has a solution.
If the operator $S$ is injective the solution is unique.
\end{theorem}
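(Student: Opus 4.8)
The plan is to apply the direct method of the calculus of variations to the minimization problem \eqref{eq:2_main_problem}. First I would establish that the problem is bounded below: the functional $H(u) = \frac{1}{2}\|Su-z\|_Y^2$ is nonnegative, so the infimum $m := \inf_{u \in \Uad} H(u) \ge 0$ is finite, and since $\Uad$ is nonempty (because $u_a \le u_b$ a.e.) there exists a minimizing sequence $(u_k) \subseteq \Uad$ with $H(u_k) \to m$.

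The key step is to extract a weakly convergent subsequence and pass to the limit. I would observe that the constraints $u_a \le u \le u_b$ a.e.\@ together with $u_a, u_b \in L^\infty(\Omega)$ and the boundedness of $\Omega$ imply that $\Uad$ is a bounded subset of $L^2(\Omega)$; indeed every $u \in \Uad$ satisfies $\|u\| \le \|\max(|u_a|,|u_b|)\|_{L^2(\Omega)} < \infty$. Hence the minimizing sequence $(u_k)$ is bounded in the Hilbert space $L^2(\Omega)$, and by reflexivity it admits a subsequence (not relabeled) converging weakly, $u_k \rightharpoonup \bar u$ in $L^2(\Omega)$. The set $\Uad$ is convex and closed in $L^2(\Omega)$, hence weakly closed, so the weak limit satisfies $\bar u \in \Uad$. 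Since $S$ is linear and continuous it is weakly continuous, giving $Su_k \rightharpoonup S\bar u$ in $Y$, and therefore $Su_k - z \rightharpoonup S\bar u - z$. The norm $\|\cdot\|_Y$ is weakly lower semicontinuous, so
\[
H(\bar u) = \tfrac{1}{2}\|S\bar u - z\|_Y^2 \le \liminf_{k \to \infty} \tfrac{1}{2}\|Su_k - z\|_Y^2 = m.
\]
Combined with $H(\bar u) \ge m$ from $\bar u \in \Uad$, this yields $H(\bar u) = m$, so $\bar u$ is a solution.

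For uniqueness under the assumption that $S$ is injective, I would argue by strict convexity. When $S$ is injective, the map $u \mapsto \|Su - z\|_Y^2$ is strictly convex on $L^2(\Omega)$: for $u_1 \ne u_2$ we have $Su_1 \ne Su_2$, and the strict convexity of the squared Hilbert norm along the nonconstant segment $t \mapsto S(tu_1 + (1-t)u_2) - z$ gives the strict inequality. Thus $H$ is a strictly convex functional minimized over the convex set $\Uad$, and a strictly convex function admits at most one minimizer on a convex set; together with existence this gives a unique solution.

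The only delicate point is the weak lower semicontinuity argument, but this is entirely standard: the main obstacle, if any, is merely verifying that $\Uad$ is weakly closed (convexity plus norm-closedness via Mazur's lemma) and that weak convergence of $u_k$ transfers through $S$ to weak lower semicontinuity of $H$. No compactness of $S$ or attainability of $z$ is needed here, since the constraint set itself supplies the boundedness that drives the argument.
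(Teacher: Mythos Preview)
Your proof is correct and follows exactly the approach the paper indicates: the paper does not give a detailed proof but simply states that existence ``can be proven by classical arguments using the direct method of the calculus of variations,'' which is precisely what you carry out. Your uniqueness argument via strict convexity of $H$ when $S$ is injective is likewise the standard one and matches the paper's remark.
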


In the following, we denote by $u^\dagger \in \Uad$ a solution of \eqref{eq:2_main_problem}.
Note that due to the strict convexity of $H$ with respect to $Su$ the optimal state $y^\dagger:=Su^\dagger$ is uniquely defined.
In addition, we define the associated adjoint state $p^\dagger := S^\ast(z- Su^\dagger)$. We then have the following result.

\begin{theorem}\label{thm:relation_ud_pd}
We have the relation
$$u^\dagger (x)  \begin{cases}
= u_a(x) & \text{if} \quad p^\dagger(x) < 0 \\
\in \left[u_a(x), u_b(x)\right] & \text{if} \quad p^\dagger(x) = 0 \\
= u_b & \text{if} \quad p^\dagger(x) > 0 \\
\end{cases}$$
and the following variational inequality holds:
$$(-p^\dagger, u - u^\dagger) \geq 0, \quad \forall u \in \Uad.$$
\end{theorem}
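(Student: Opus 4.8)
The plan is to derive both assertions from the first-order necessary optimality condition for the convex minimization problem \eqref{eq:2_main_problem}. First I would note that $H$ is convex and Gâteaux differentiable on $L^2(\Omega)$, with derivative $H'(u)v = (Su - z, Sv)_Y = (S^\ast(Su-z), v)$, since $S$ is linear and continuous and $\|\cdot\|_Y^2$ is smooth. Because $\Uad$ is convex and $u^\dagger$ minimizes $H$ over $\Uad$, the standard variational-inequality characterization of minimizers on convex sets gives
$$H'(u^\dagger)(u - u^\dagger) = (S^\ast(Su^\dagger - z),\, u - u^\dagger) \geq 0 \quad \forall u \in \Uad.$$
Recalling the definition $p^\dagger = S^\ast(z - Su^\dagger)$, the left-hand side equals $(-p^\dagger, u - u^\dagger)$, which is precisely the claimed variational inequality.

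Next I would extract the pointwise relation from this global inequality. Rewriting it as $(p^\dagger, u - u^\dagger) \leq 0$ for all $u \in \Uad$ and exploiting that the admissible set is defined by pointwise box constraints, I would argue that the inequality decouples pointwise, i.e. that for almost every $x \in \Omega$
$$p^\dagger(x)\bigl(v - u^\dagger(x)\bigr) \leq 0 \quad \text{for all } v \in [u_a(x), u_b(x)].$$
From this the three cases follow by elementary reasoning: if $p^\dagger(x) > 0$, then $v \leq u^\dagger(x)$ must hold for every admissible $v$, forcing $u^\dagger(x) = u_b(x)$; if $p^\dagger(x) < 0$, then analogously $u^\dagger(x) = u_a(x)$; and if $p^\dagger(x) = 0$, no restriction beyond feasibility is imposed, so $u^\dagger(x) \in [u_a(x), u_b(x)]$.

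The main obstacle is the localization step, that is, passing rigorously from the integrated inequality to its pointwise version. The clean way to do this is by contradiction: assuming the pointwise inequality fails on a measurable set $E \subseteq \Omega$ of positive measure, one constructs a competitor $u \in \Uad$ that coincides with $u^\dagger$ outside $E$ and is modified toward $u_a$ or $u_b$ on $E$, thereby violating $(p^\dagger, u - u^\dagger) \leq 0$. Care is needed to ensure that the modified function remains measurable and admissible; a measurable selection argument together with the definition of $\Uad$ handles this. The remaining case analysis and the verification of the variational inequality itself are then routine.
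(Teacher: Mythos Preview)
Your argument is correct and is the standard derivation of the pointwise optimality system from the variational inequality for box-constrained convex problems. The paper itself states this theorem without proof, treating it as a well-known result; your write-up supplies exactly the routine justification one would expect, so there is nothing to compare against.
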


This result shows that the solution $u^\dagger$ can be determined from $p^\dagger$ if the set $\{x: \ p^\dagger(x)\ne0\}$ has
zero measure. As a consequence, the problem \eqref{eq:2_main_problem} is uniquely solvable in this case.

\subsection{Bregman distance}
We want to apply the Bregman iteration with the regularization functional
$$J(u) := \frac{1}{2}\|u\|^2 + I_\Uad(u),$$
where $I_C$ denotes the indicator function of the set $C$. The Bregman distance for $J$ at $u,v \in L^2(\Omega)$ and $\lambda \in \partial J(v)$ is defined as
$$D^\lambda(u,v) := J(u) - J(v) - (u-v, \lambda).$$
Note that $\lambda = v + w$ with $w \in \partial I_\Uad(v)$, hence:
\begin{equation}\label{eq:Bregman_distance}
D^\lambda(u,v) = \frac{1}{2}\|u-v\|^2 + I_\Uad(u) - I_\Uad(v) - (u-v,w).
\end{equation}
Let us summarize the properties of $J$ and $D$:
\begin{lemma}\label{lemma:Bregman_nonegativ}
Let $C \subseteq L^2(\Omega)$ be non-empty, closed, and convex. The functional
$$J: L^2(\Omega) \to \mathbb{R}\cup \{ + \infty\}, \quad u \mapsto \frac{1}{2}\|u\|^2 + I_C(u)$$
is convex and nonnegative. Furthermore the Bregman distance
$$D^\lambda(u,v) := J(u) - J(v) - (u-v, \lambda), \quad \lambda \in \partial J(v)$$
is nonnegative and convex with respect to $u$.
\end{lemma}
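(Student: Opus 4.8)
The plan is to treat the four claims separately, exploiting the additive structure $J = \frac{1}{2}\|\cdot\|^2 + I_C$ together with the defining property of the subgradient. For convexity and nonnegativity of $J$, I would argue termwise: the map $u \mapsto \frac{1}{2}\|u\|^2$ is convex (being induced by the inner product on the Hilbert space $L^2(\Omega)$) and clearly nonnegative, while $I_C$, taking the value $0$ on $C$ and $+\infty$ otherwise, is convex precisely because $C$ is convex, and is nonnegative by definition. As $J$ is the sum of two convex, nonnegative, extended-real-valued functionals, it is itself convex and nonnegative, adopting the usual convention $+\infty + t = +\infty$.

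Next, the nonnegativity of the Bregman distance follows directly from the definition of the subgradient. Since $\lambda \in \partial J(v)$, the subgradient inequality $J(u) \geq J(v) + (u-v,\lambda)$ holds for every $u \in L^2(\Omega)$, and rearranging yields $D^\lambda(u,v) = J(u) - J(v) - (u-v,\lambda) \geq 0$ at once. Here one should note that $\partial J(v) \neq \emptyset$ forces $v \in C$, so that $J(v)$ is finite and the expression is well defined.

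For convexity of $D^\lambda(\cdot,v)$, I would observe that, for fixed $v$ and fixed $\lambda$, the quantity $J(v)$ is constant and $(u-v,\lambda)$ is affine in $u$; hence $u \mapsto D^\lambda(u,v)$ is the sum of the convex functional $J$ and an affine function of $u$, and is therefore convex.

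I do not expect a genuine obstacle in this lemma, as all four assertions are routine consequences of elementary convex analysis. The only points requiring mild care are the bookkeeping with the extended-real-valued term $I_C$, so that the arithmetic involving $+\infty$ and the subgradient inequality remain valid at points outside $C$, and the remark that membership $\lambda \in \partial J(v)$ already guarantees the finiteness of $J(v)$ needed for the definitions to make sense.
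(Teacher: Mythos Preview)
Your argument is correct and complete. The paper actually states this lemma without proof, treating it as a standard fact of convex analysis; your termwise verification of convexity and nonnegativity of $J$, together with the subgradient inequality for the nonnegativity of $D^\lambda$ and the ``convex plus affine'' observation for its convexity in $u$, is exactly the expected elementary justification.
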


The subgradient $\partial I_\Uad(v)$ is the normal cone of $\Uad$ at $v$, which can be characterized as:
$$\partial I_\Uad(v) = \left\{ w \in L^2(\Omega): \quad  w(x)      \begin{cases}
    \leq 0 & \text{if } v(x)= u_a(x) \\
    = 0 & \text{if } u_a(x) < v(x) < u_b(x) \\
    \geq 0 & \text{if } v(x) = u_b(x)
    \end{cases} \quad \right\}.$$
Hence, we have for the Bregman distance at $v\in \Uad$
\begin{align*}
D^{\lambda}(u,v) &= \frac{1}{2}\|u-v\|^2 + I_\Uad(u) \\
&\quad + \int\limits_{\{v=u_a\}} w ( u_a - u) \;\dx + \int\limits_{\{v=u_b\}} w ( u_b - u) \;\dx.
\end{align*}
where we abbreviated by $\{v = u_a\}$ the set $\{x \in \Omega: \; v(x) = u_a(x)\}$. We see that the Bregman distance adds
two parts that measures $u$ on sets where the control constraints are active for $v$.
Due to the properties of $w\in \partial I_\Uad(v)$ we obtain
\begin{equation}\label{eq202}
 D^{\lambda}(u,v) \ge \frac{1}{2}\|u-v\|^2 \quad \forall u,v\in \Uad, \ \lambda \in \partial J(v).
\end{equation}
Since the subgradient $\partial I_\Uad(v)$ is not a singleton in general,
the Bregman distance depends on the choice of the subgradient $w \in \partial I_\Uad(v)$.
In the algorithm described below we will derive a suitable choice for the subgradients $\lambda\in\partial J(u)$ and $w\in\partial I_\Uad(u)$.

\section{Bregman iteration}
\label{sec3}
To start our algorithm we need suitable starting values $u_0 \in \Uad$ and $\lambda_0 \in \partial J(u_0)$. We define $u_0$ to be the solution of the problem
$$\min\limits_{u \in L^2(\Omega)} J(u) = \frac{1}{2} \|u\|^2 + I_\Uad(u),$$
which yields $u_0 = P_\Uad(0)$. Furthermore this choice ensures $0 \in \partial J(u_0)$, so we simply set $\lambda_0 = 0$. Note that all of the following results can be extended to arbitrary $u_0 \in \Uad$ and general subgradients $\lambda_0 \in \partial J(u_0) \cap R(S^\ast)$.
The (prototypical) Bregman iteration is now defined as follows:
{
\renewcommand{\themyalg}{\textbf{A${}_0$}}
\begin{myalg}\label{alg:Min1}
Let $u_0 = P_\Uad(0)  \in \Uad$, $\lambda_0=0  \in \partial J(u_0)$ and $k=1$.
\begin{enumerate}
  \item Solve for $u_k$: \label{a0_start}
  \begin{equation}
\label{Min1}
\text{Minimize} \quad \frac{1}{2}\|Su-z\|_Y^2 + \alpha_{k}  D^{\lambda_{k-1}}(u,u_{k-1}).
\end{equation}
  \item Choose $\lambda_k \in \partial J(u_k)$.
  \item Set $k:=k+1$, go back to \ref{a0_start}.
\end{enumerate}
\end{myalg}
\setcounter{myalg}{0}
}
Here $(\alpha_k)_k$ is a bounded sequence of positive real numbers. If $u^\dagger$ is a solution of \eqref{eq:2_main_problem}, it satisfies $u^\dagger = P_\Uad \big( u^\dagger - \Theta S^\ast (Su^\dagger - z) \big)$ with $\Theta > 0$ arbitrary. Therefore a possible stopping criterion is given by (with $\eps > 0$)
$$\left\| u_k - P_\Uad \big( u_k - \Theta S^\ast (Su_k - z) \big) \right\| \leq \eps.$$
We introduce the quantity
$$\gamma_k := \sum\limits_{j=1}^k \frac{1}{\alpha_j}.$$
Since the sequence $\alpha_j$ is bounded we obtain
$$\lim\limits_{k \to \infty} \gamma_k^{-1} = 0.$$

In algorithm \ref{alg:Min1} it remains to specify how to choose the subgradient $\lambda_k$ for the next iteration. We will show that we can construct a new subgradient based on the iterates $u_1,...,u_k$. The following result motivates the construction of the subgradient. Moreover it shows that algorithm \ref{alg:Min1} is well-posed.
\begin{lemma}\label{lemma:opt_alg}
The problem \eqref{Min1} has a unique solution $u_k \in \Uad$ and there exists $w_k \in \partial I_\Uad(u_k)$ such that
$$S^\ast(Su_k - z) + \alpha_k( u_k - \lambda_{k-1} + w_k  ) = 0.$$
Moreover, the subgradient $\partial J(u_k)$ is non-empty.
\end{lemma}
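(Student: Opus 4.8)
The plan is to read \eqref{Min1} as the minimization of a single strongly convex, lower semicontinuous functional, and then to extract the optimality system via the subdifferential sum rule.

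First I would expand $D^{\lambda_{k-1}}(u,u_{k-1})$ and discard every term that does not depend on $u$. Using $u_{k-1}\in\Uad$ and writing $\lambda_{k-1}=u_{k-1}+w_{k-1}$ with $w_{k-1}\in\partial I_\Uad(u_{k-1})$, the objective of \eqref{Min1} coincides, up to an additive constant, with
$$\tilde F(u):=\frac12\|Su-z\|_Y^2+\frac{\alpha_k}{2}\|u\|^2-\alpha_k(u,\lambda_{k-1})+\alpha_k I_\Uad(u).$$
The first three terms assemble into a Fr\'echet-differentiable functional $G$ that is convex and, because of the quadratic term $\frac{\alpha_k}{2}\|u\|^2$, strongly convex with modulus $\alpha_k$; the last term is proper, convex, and lower semicontinuous since $\Uad$ is non-empty, closed, and convex.

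Existence and uniqueness then follow from the direct method: $\tilde F$ is proper, weakly lower semicontinuous, and coercive (the term $\frac{\alpha_k}{2}\|u\|^2$ dominates the affine term $-\alpha_k(u,\lambda_{k-1})$ while the remaining summands are bounded below by zero), so a minimizer exists, and strong convexity makes it unique. This minimizer $u_k$ necessarily lies in $\Uad$, since otherwise $\tilde F(u_k)=+\infty$.

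For the optimality condition I would apply the sum rule $0\in\nabla G(u_k)+\alpha_k\partial I_\Uad(u_k)$, which is valid because $G$ is finite and continuous on all of $L^2(\Omega)$, so that no constraint qualification is needed. Since $\nabla G(u_k)=S^\ast(Su_k-z)+\alpha_k u_k-\alpha_k\lambda_{k-1}$, there is $w_k\in\partial I_\Uad(u_k)$ with $S^\ast(Su_k-z)+\alpha_k(u_k-\lambda_{k-1}+w_k)=0$, as claimed. Finally, putting $\lambda_k:=u_k+w_k$ and using $\partial(\tfrac12\|\cdot\|^2)(u_k)=\{u_k\}$ together with the same sum rule gives $\lambda_k\in\partial J(u_k)$, so $\partial J(u_k)$ is non-empty. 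The only delicate point is the application of the subdifferential sum rule and the verification that the extracted multiplier $w_k$ indeed belongs to the normal cone $\partial I_\Uad(u_k)$; since the smooth part $G$ is everywhere finite and continuous this is routine, and the explicit description of $\partial I_\Uad$ recorded after Lemma \ref{lemma:Bregman_nonegativ} can be used to make it concrete.
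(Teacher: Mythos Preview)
Your argument is correct and follows essentially the same route as the paper: rewrite the Bregman subproblem as a strictly convex problem, obtain existence via the direct method, and extract the multiplier $w_k\in\partial I_\Uad(u_k)$ from first-order optimality. The only cosmetic difference is that the paper invokes weak compactness of the bounded set $\Uad$ in place of your coercivity argument; either suffices here.
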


\begin{proof}
The set of admissible functions $\Uad$ is nonempty, closed, convex, and bounded, hence weakly compact. Furthermore,
the function $J_k$ defined by
\[
J_k: L^2(\Omega) \to \R, \quad u \mapsto  \frac{1}{2}\|u-u_{k-1}\|^2 - (u-u_{k-1}, \lambda_{k-1})
\]
is continuous and convex, hence it is weakly lower semi-continuous.
It is easy to check that \eqref{Min1}
is equivalent to
\[
 \min\limits_{u \in \Uad} H(u) + \alpha_k J_k(u).
 \]
Since $H$ is convex, the function $H+\alpha_k J_k$ is convex and by the Weierstra{\ss} theorem (with respect to the weak topology) we get existence of minimizers. Since $\alpha_k \neq 0$ and $J_k$ is strictly convex, minimizers are also unique. By the first-order optimality condition for \eqref{Min1}
there exists  $w_k \in \partial I_\Uad (u_k)$ such that
$$S^\ast(Su_k - z) + \alpha_k( u_k - \lambda_{k-1} + w_k  ) = 0.$$
Clearly, it holds $\partial J(u_k)\ne\emptyset$.
\end{proof}

We have $\partial J(u_k) = u_k + \partial I_\Uad(u_k)$, so motivated by Lemma \ref{lemma:opt_alg} we set
\begin{equation}\label{def:subdiff}
\lambda_k := u_k + w_k = \frac{1}{\alpha_k} S^\ast(z-Su_k) + \lambda_{k-1} \in \partial J(u_k)
\end{equation}
An induction argument now yields the following result.
\begin{lemma}\label{lemma:Algo_well_posed}
Let the subgradients $\lambda_k \in \partial J(u_k)$ be chosen according to \eqref{def:subdiff}. Then it holds
$$\lambda_k = S^\ast \mu_k, \quad \mu_k := \sum\limits_{i=1}^k \frac{1}{\alpha_i} (z-Su_i).$$
\end{lemma}

With this choice of $\lambda_k$,
we see that the Bregman iteration \ref{alg:Min1} can be equivalently formulated as:
\begin{myalg}\label{alg:Min2}
Let $u_0 = P_\Uad(0)  \in \Uad$, $\mu_0 = 0$, $\lambda_0 = 0  \in \partial J(u_0)$ and $k=1$.
\begin{enumerate}
  \item Solve for $u_k$: \label{a_start}
  \begin{equation}
\label{Min1xx}\tag{A1}
\text{Minimize} \quad \frac{1}{2}\|Su-z\|_Y^2 + \alpha_{k}  D^{\lambda_{k-1}}(u,u_{k-1}).
\end{equation}
  \item Set $\mu_k := \sum\limits_{i=1}^k \frac{1}{\alpha_i} (z-Su_i)$ and $\lambda_k := S^\ast \mu_k$.
  \item Set $k:=k+1$, go back to \ref{a_start}.
\end{enumerate}

\end{myalg}

As argued in \cite{burger2007,osher2005}, algorithm \ref{alg:Min2} is equivalent to the following algorithm:
\begin{myalg}\label{alg:Min3}
Let $\mu_0 := 0$ and $k=1$.
\begin{enumerate}
  \item Solve for $u_k$: \label{b_start}
  \begin{align*}
  \text{Minimize} \quad &\frac{1}{2}\|Su-z - \alpha_{k}\mu_{k-1}\|_Y^2 + \frac{\alpha_{k}}{2} \|u\|^2\\
  \text{such that} \quad & u_k \in \Uad
  \end{align*}
  \item Set $\mu_k = \cfrac{1}{\alpha_{k}}(z-Su_k) + \mu_{k-1}$.
  \item Set $k:=k+1$, go back to \ref{b_start}.
\end{enumerate}

\end{myalg}
The equivalence can be seen directly by computing the first-order optimality conditions. For a solution $u_k$ given by algorithm \ref{alg:Min2} we obtain
$$\big( S^\ast(Su_k - z) + \alpha_k ( u_k - \lambda_{k-1}), v-u_{k} \big) \geq 0, \quad \forall v \in \Uad, $$
while for an iterate $\bar u_k$ and resulting $\bar \mu_k$ of algorithm \ref{alg:Min3} we get
$$\big( S^\ast(S\bar u_k - z - \alpha_k \bar \mu_{k-1}) + \alpha_k \bar u_k, v-\bar u_{k} \big) \geq 0, \quad \forall v \in \Uad.$$
By adding both inequalities and applying an induction, we obtain
$$\|S(u_k - \bar u_k)\|_Y^2 + \alpha_k \|u_k - \bar u_k\|^2 \leq (\alpha_k S^\ast \mu_{k-1} - \alpha_k \lambda_{k-1}, \bar u_k - u_k) .$$
By definition $\lambda_{k-1} = S^\ast \mu_{k-1}$ and therefore both algorithms coincide.

\subsection{A priori error estimates for $H(u_k)$}
We want to show first error estimates in terms of $|H(u_k) - H(u^\dagger)|$, where $u^\dagger$ is a solution of \eqref{eq:2_main_problem}. The following result can be proven similar to the proof presented in \cite{osher2005} and is omitted here.

\begin{lemma}\label{lemma:functional_decreasing}
The iterates of algorithm \ref{alg:Min2} satisfy
$$H(u_k) \leq H(u_{k-1}).$$
\end{lemma}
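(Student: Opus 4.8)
The plan is to exploit the defining minimality of the iterate $u_k$ for problem \eqref{Min1xx} and to test it against the previous iterate $u_{k-1}$. Recall from Lemma \ref{lemma:opt_alg} that $u_k$ is the unique minimizer over $\Uad$ of the functional $u \mapsto H(u) + \alpha_k D^{\lambda_{k-1}}(u, u_{k-1})$, where I use the equivalent formulation noted in the proof of Lemma \ref{lemma:opt_alg}. Since $u_{k-1} \in \Uad$ is itself an admissible competitor, the minimality of $u_k$ immediately yields
\[
H(u_k) + \alpha_k D^{\lambda_{k-1}}(u_k, u_{k-1}) \le H(u_{k-1}) + \alpha_k D^{\lambda_{k-1}}(u_{k-1}, u_{k-1}).
\]

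The second key observation is that the Bregman distance vanishes when both arguments coincide. Indeed, from the definition $D^{\lambda_{k-1}}(u_{k-1}, u_{k-1}) = J(u_{k-1}) - J(u_{k-1}) - (u_{k-1} - u_{k-1}, \lambda_{k-1}) = 0$, so the right-hand side above collapses to $H(u_{k-1})$. This gives
\[
H(u_k) + \alpha_k D^{\lambda_{k-1}}(u_k, u_{k-1}) \le H(u_{k-1}).
\]

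Finally I would invoke the nonnegativity of the Bregman distance established in Lemma \ref{lemma:Bregman_nonegativ} (see also \eqref{eq202}), together with $\alpha_k > 0$, to discard the nonnegative term $\alpha_k D^{\lambda_{k-1}}(u_k, u_{k-1})$ on the left. This yields $H(u_k) \le H(u_k) + \alpha_k D^{\lambda_{k-1}}(u_k, u_{k-1}) \le H(u_{k-1})$, which is the claim. There is no real obstacle here: the argument is a one-line consequence of minimality, the self-annihilation of the Bregman distance, and its nonnegativity. The only point worth stating carefully is that $u_{k-1}$ is admissible for the $k$-th subproblem — guaranteed since every iterate lies in $\Uad$ by Lemma \ref{lemma:opt_alg} — so that it is a legitimate test function.
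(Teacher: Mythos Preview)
Your proof is correct and is precisely the standard argument: test the minimality of $u_k$ against the previous iterate, use $D^{\lambda_{k-1}}(u_{k-1},u_{k-1})=0$, and drop the nonnegative Bregman term. The paper itself omits the proof, referring to \cite{osher2005}; your argument is exactly the one that reference contains, so there is nothing to add or correct.
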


Following the proof of \cite[Theorem 3.3]{osher2005} we can formulate a convergence result on $(H(u_k))_k$, together with an a-priori error estimate.

\begin{theorem}\label{thm:H_convergence}
The iterates of algorithm \ref{alg:Min2} satisfy
$$|H(u_k) - H(u^\dagger)| = \mathcal{O} \left( \gamma_k^{-1} \right).$$
Hence we have convergence, since the $\alpha_k$ are uniformly bounded. Furthermore we have
$$D^{\lambda_k}(u^\dagger, u_{k}) \leq D^{\lambda_{k-1}}(u^\dagger, u_{k-1}) \quad \text{and} \quad \sum\limits_{i=1}^\infty D^{\lambda_{i-1}}(u_i,u_{i-1})  < \infty.$$
\end{theorem}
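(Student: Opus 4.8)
The plan is to follow the template of the Bregman-iteration analysis in \cite{osher2005}, replacing every step that there relies on attainability ($z=Su^\dagger$) by an argument that uses only the optimality of $u^\dagger$ within $\Uad$. The backbone is a three-point identity for the generalized Bregman distance. Writing out the definitions of $D^{\lambda_k}(u^\dagger,u_k)$, $D^{\lambda_{k-1}}(u^\dagger,u_{k-1})$ and $D^{\lambda_{k-1}}(u_k,u_{k-1})$ and cancelling the $J$-terms, I would first establish
\[
D^{\lambda_k}(u^\dagger,u_k) - D^{\lambda_{k-1}}(u^\dagger,u_{k-1}) + D^{\lambda_{k-1}}(u_k,u_{k-1}) = (\lambda_k - \lambda_{k-1}, u_k - u^\dagger).
\]
By Lemma \ref{lemma:opt_alg} together with the update \eqref{def:subdiff}, the subgradients satisfy $\lambda_k - \lambda_{k-1} = \alpha_k^{-1} S^\ast(z - Su_k)$, so the right-hand side equals $\alpha_k^{-1}(z - Su_k, Su_k - Su^\dagger)_Y$.

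Next I would control the sign and the size of this inner product. Abbreviating $r_k := Su_k - z$ and $r^\dagger := Su^\dagger - z$, a short computation gives $(z-Su_k, Su_k-Su^\dagger)_Y = -\|r_k - r^\dagger\|_Y^2 - (r^\dagger, r_k - r^\dagger)_Y$. Here the variational inequality of Theorem \ref{thm:relation_ud_pd}, applied with $u = u_k \in \Uad$, yields $(r^\dagger, r_k - r^\dagger)_Y = (-p^\dagger, u_k - u^\dagger) \geq 0$. Hence the inner product is nonpositive. Combined with $D^{\lambda_{k-1}}(u_k,u_{k-1}) \geq 0$ from Lemma \ref{lemma:Bregman_nonegativ}, the three-point identity immediately delivers the monotonicity $D^{\lambda_k}(u^\dagger,u_k) \leq D^{\lambda_{k-1}}(u^\dagger,u_{k-1})$. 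Summing the identity from $1$ to $N$ telescopes the first two terms; since $D^{\lambda_0}(u^\dagger,u_0)<\infty$ and the accumulated right-hand side is nonpositive, the partial sums of $\sum_i D^{\lambda_{i-1}}(u_i,u_{i-1})$ stay bounded, giving the claimed summability.

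For the rate, note first that $|H(u_k)-H(u^\dagger)| = H(u_k)-H(u^\dagger)$ by optimality of $u^\dagger$. Completing the square gives the exact relation $H(u_k)-H(u^\dagger) = -(z-Su_k, Su_k-Su^\dagger)_Y - \tfrac12\|r_k - r^\dagger\|_Y^2$, hence $H(u_k)-H(u^\dagger) \leq -(z-Su_k, Su_k-Su^\dagger)_Y$ without any further assumption. Substituting the three-point identity then yields
\[
\frac{1}{\alpha_k}\bigl(H(u_k)-H(u^\dagger)\bigr) \leq D^{\lambda_{k-1}}(u^\dagger,u_{k-1}) - D^{\lambda_k}(u^\dagger,u_k).
\]
Summing over $j=1,\dots,k$ telescopes the right-hand side to at most $D^{\lambda_0}(u^\dagger,u_0)$. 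Finally I would invoke the monotonicity $H(u_j)\geq H(u_k)$ for $j\leq k$ from Lemma \ref{lemma:functional_decreasing} to factor the smallest term out of the weighted sum, obtaining
\[
\bigl(H(u_k)-H(u^\dagger)\bigr)\,\gamma_k \;\leq\; \sum_{j=1}^k \frac{1}{\alpha_j}\bigl(H(u_j)-H(u^\dagger)\bigr) \;\leq\; D^{\lambda_0}(u^\dagger,u_0),
\]
so that $0 \leq H(u_k)-H(u^\dagger) \leq \gamma_k^{-1} D^{\lambda_0}(u^\dagger,u_0) = \mathcal{O}(\gamma_k^{-1})$, and $\gamma_k^{-1}\to 0$ gives convergence.

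The step I expect to be the main obstacle is precisely the non-attainable residual. In the classical attainable case $r^\dagger = 0$, the inner product collapses to $-\|r_k-r^\dagger\|_Y^2 = -2H(u_k)$ and both the nonnegativity and the rate are automatic. Without attainability the cross term $(r^\dagger, r_k - r^\dagger)_Y$ is the delicate point: its nonnegativity cannot come from $r^\dagger$ vanishing and must instead be extracted from the first-order optimality of $u^\dagger$ over $\Uad$, i.e.\ from the variational inequality of Theorem \ref{thm:relation_ud_pd}. Making sure this substitution goes through in each place where \cite{osher2005} used $z=Su^\dagger$ is the heart of the argument.
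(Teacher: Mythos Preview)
Your proof is correct and is precisely the adaptation of \cite[Theorem~3.3]{osher2005} that the paper has in mind; the paper itself omits the argument and simply points to that reference. The three-point identity, the subgradient update $\lambda_k-\lambda_{k-1}=\alpha_k^{-1}S^\ast(z-Su_k)$, the telescoping sum, and the use of Lemma~\ref{lemma:functional_decreasing} to extract the factor $\gamma_k$ all match the intended route, and your replacement of the attainability assumption $r^\dagger=0$ by the variational inequality $(-p^\dagger,u_k-u^\dagger)\ge0$ from Theorem~\ref{thm:relation_ud_pd} is exactly the modification required in the present constrained, non-attainable setting.
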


The monotonicity of $D^{\lambda_k}(u^\dagger, u_{k})$ will play a crucial role in the subsequent analysis. Together with the lower bound \eqref{eq202}
it will allow to proof strong convergence $u_k\to u^\dagger$ under suitable conditions.

\subsection{Auxiliary estimates}

In the sequel, we will denote by $(u_k)_k$ the sequence of iterates provided by algorithm \ref{alg:Min2}.
Let us start with the following result, which will be useful in the convergence analysis later on.
\begin{lemma}\label{lemma:gamma_sum}
Let $\beta_j \geq 0$, such that $\beta_j \to 0$. We then have
$$\lim\limits_{k \to \infty}\gamma_k^{-1} \sum\limits_{j=1}^k \alpha_j^{-1} \beta_j = 0.$$
\end{lemma}

\begin{proof}

Let $\eps > 0$ be arbitrary. Since $\beta_j \to 0$ we can choose $N$ such that $\beta_j \leq \frac{\eps}{2}$ holds for all $j \geq N$. Since $\gamma_k^{-1} \to 0$ there is $M>N$ such that
$$\gamma_k^{-1} \sum\limits_{j=1}^N \alpha_j^{-1} \beta_j \leq \frac{\eps}{2}$$
holds for all $k \geq M$. We compute for $k \geq M$:
\begin{align*}
\gamma_k^{-1} \sum\limits_{j=1}^k \alpha_j^{-1} \beta_j &= \gamma_k^{-1} \sum\limits_{j=1}^N \alpha_j^{-1} \beta_j + \gamma_k^{-1} \sum\limits_{j=N+1}^k \alpha_j^{-1} \beta_j\\
&\leq \frac{\eps}{2} + \frac{\eps}{2} \gamma_k^{-1} \sum\limits_{j=N+1}^k \alpha_j^{-1} \leq \frac{\eps}{2} + \frac{\eps}{2} \gamma_k^{-1} \gamma_k
\leq \eps,
\end{align*}
which is the claim.
\end{proof}

In the case that $Su_k$ is equal to the optimal state $y^\dagger = Su^\dagger$,
the algorithm gives $u_{k+1}=u_k$, which is then a solution of \eqref{eq:2_main_problem}.

\begin{lemma}\label{lemma:algo_stat}
Let $y^\dagger$ be the optimal state of \eqref{eq:2_main_problem}. If $Su_k = y^\dagger$ then it holds $u_{k+1} = u_k$,
and $u_k$ solves \eqref{eq:2_main_problem}.
\end{lemma}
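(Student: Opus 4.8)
The plan is to treat the two assertions separately and to dispatch the easier one first, since it turns out to be the key to the other. Because $u_k\in\Uad$ and $Su_k=y^\dagger=Su^\dagger$, we immediately obtain $H(u_k)=\frac12\|Su_k-z\|_Y^2=\frac12\|y^\dagger-z\|_Y^2=H(u^\dagger)$, so $u_k$ is feasible and attains the optimal value; hence $u_k$ solves \eqref{eq:2_main_problem}. This settles the second claim with no reference to the algorithm, and it is precisely the fact that $u_k$ is optimal that will let us build the subgradient needed below.

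For the identity $u_{k+1}=u_k$, I would verify that $u_k$ already satisfies the first-order system characterizing the unique minimizer of the next subproblem and then invoke uniqueness from Lemma \ref{lemma:opt_alg}. That lemma says $u_{k+1}$ is the unique element of $\Uad$ for which there is $w_{k+1}\in\partial I_\Uad(u_{k+1})$ with
$$S^\ast(Su_{k+1}-z)+\alpha_{k+1}\bigl(u_{k+1}-\lambda_k+w_{k+1}\bigr)=0.$$
So it suffices to produce $w\in\partial I_\Uad(u_k)$ with $S^\ast(Su_k-z)+\alpha_{k+1}(u_k-\lambda_k+w)=0$. Using $\lambda_k=u_k+w_k$ from \eqref{def:subdiff} gives $u_k-\lambda_k=-w_k$, so the required identity collapses to $\alpha_{k+1}(w-w_k)=S^\ast(z-Su_k)=p^\dagger$, where $p^\dagger=S^\ast(z-Su_k)=S^\ast(z-y^\dagger)$ is the (uniquely determined) adjoint state. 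The natural candidate is therefore
$$w:=w_k+\tfrac{1}{\alpha_{k+1}}p^\dagger,$$
for which the optimality identity then holds by construction, the two adjoint contributions cancelling.

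The decisive step is to check the membership $w\in\partial I_\Uad(u_k)$. Here I would use that $u_k$ is a solution of \eqref{eq:2_main_problem}, so Theorem \ref{thm:relation_ud_pd} applied to $u_k$ yields $(-p^\dagger,u-u_k)\ge 0$ for all $u\in\Uad$, i.e.\@ $p^\dagger\in\partial I_\Uad(u_k)$. Since $\partial I_\Uad(u_k)$ is the normal cone of $\Uad$ at $u_k$, it is a convex cone; as it contains both $w_k$ and $p^\dagger$ and $\alpha_{k+1}>0$, it also contains the combination $w=w_k+\alpha_{k+1}^{-1}p^\dagger$. Thus $u_k$ is admissible for the optimality system of the $(k+1)$-th subproblem, and uniqueness in Lemma \ref{lemma:opt_alg} forces $u_{k+1}=u_k$.

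I expect the only genuine subtlety to be this membership $w\in\partial I_\Uad(u_k)$: one must recognize that $u_k$ being an optimal control places $p^\dagger$ in the same normal cone as $w_k$, so that the two subgradient contributions can be added. Everything else is a rearrangement of the optimality conditions together with the relations $\lambda_k=u_k+w_k$ and $p^\dagger=S^\ast(z-Su_k)$.
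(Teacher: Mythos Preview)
Your argument is correct, but it proceeds quite differently from the paper. The paper works variationally: it compares the objective values of the $(k{+}1)$-th subproblem at $u_{k+1}$ and at $u_k$, uses $\|Su_{k+1}-z\|_Y\ge\|y^\dagger-z\|_Y$ (optimality of the state) to force $D^{\lambda_k}(u_{k+1},u_k)=0$, and then reads off $u_{k+1}=u_k$ from the lower bound \eqref{eq202}; optimality of $u_k$ is concluded afterwards. You reverse the order: you first observe that $u_k$ is optimal, then verify the KKT system of the next subproblem at $u_k$ by manufacturing $w=w_k+\alpha_{k+1}^{-1}p^\dagger$ and exploiting that $\partial I_\Uad(u_k)$ is a convex cone containing both $w_k$ and $p^\dagger$. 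Your route leans on the cone structure of the normal cone and on the sufficiency of first-order conditions for strictly convex problems (a point you invoke via Lemma~\ref{lemma:opt_alg}, though that lemma literally only states necessity and uniqueness of the minimizer; sufficiency is of course standard). The paper's route is shorter and avoids any subgradient bookkeeping, while yours makes transparent \emph{why} $u_k$ is a fixed point: the missing multiplier needed for the next step is supplied precisely by the optimality of $u_k$ for \eqref{eq:2_main_problem}.
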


\begin{proof}
Since $u_{k+1}$ is the minimizer of
$$\frac{1}{2}\|Su - z\|_Y^2 + \alpha_{k+1} D^{\lambda_k}(u,u_k)$$
it follows
\begin{align*}
\frac{1}{2}\|Su_{k+1} - z\|_Y^2 + \alpha_{k+1} D^{\lambda_k}(u_{k+1},u_k) &\leq \frac{1}{2}\|Su_k - z\|_Y^2 + \alpha_{k+1} D^{\lambda_k}(u_k,u_k)\\
& = \frac{1}{2}\|y^\dagger - z\|_Y^2.
\end{align*}
Since $y^\dagger$ is the optimal state of \eqref{eq:2_main_problem}, it follows $\|y^\dagger - z\|_Y \le \|Su_{k+1} - z\|_Y$,
and hence we obtain
$$0 = D^{\lambda_k}(u_{k+1},u_k) = \frac{1}{2}\|u_{k+1} - u_k\|^2 - (w_k, u_{k+1} - u_k).$$
By construction we have $w_k \in \partial I_\Uad(u_k)$, so
$$\frac{1}{2}\|u_{k+1} - u_k\|^2 = (w_k, u_{k+1} - u_k) \leq 0,$$
which implies  $u_{k+1} = u_k$.
Since $Su_k=y^\dagger$ it follows that $u_k=u_{k+1}$ solves \eqref{eq:2_main_problem}.
\end{proof}

If the algorithm reaches a solution of \eqref{eq:2_main_problem}
after a finite number of steps, we can show that this solution satisfies a source condition.
This  condition is used below in Section \ref{sec:sc} to prove strong convergence of the iterates.

\begin{lemma}\label{lemma:Solution_character}
Let $u_k$ be a solution of \eqref{eq:2_main_problem} for some $k$. Then there exists a $w \in Y$ such that $u_k = P_\Uad(S^\ast w)$ holds.
\end{lemma}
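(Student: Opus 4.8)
The plan is to reduce the claim to the variational characterization of the metric projection onto the convex set $\Uad$ and then to read off the required vector $w$ directly from the subgradient bookkeeping already carried out by the algorithm. Recall the standard fact from convex analysis that for the nonempty, closed, convex set $\Uad$ one has, for every $g \in L^2(\Omega)$,
\[
u = P_\Uad(g) \quad \Longleftrightarrow \quad g - u \in \partial I_\Uad(u),
\]
where we use that $\partial I_\Uad(u)$ is exactly the normal cone of $\Uad$ at $u$ (as recorded earlier in the excerpt). Hence, to prove $u_k = P_\Uad(S^\ast w)$ it suffices to produce some $w \in Y$ with $S^\ast w - u_k \in \partial I_\Uad(u_k)$.

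First I would collect the two facts about the subgradient chosen by the method. By Lemma \ref{lemma:Algo_well_posed} the subgradient satisfies $\lambda_k = S^\ast \mu_k$ with $\mu_k \in Y$ given explicitly there. On the other hand, by the construction \eqref{def:subdiff} together with the optimality condition from Lemma \ref{lemma:opt_alg}, we have $\lambda_k = u_k + w_k$ for some $w_k \in \partial I_\Uad(u_k)$. Equating the two expressions for $\lambda_k$ gives $S^\ast \mu_k - u_k = w_k \in \partial I_\Uad(u_k)$.

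Then I would conclude via the projection characterization above, applied with $g = S^\ast \mu_k$: the inclusion $S^\ast \mu_k - u_k \in \partial I_\Uad(u_k)$ is precisely the optimality condition for $u_k = P_\Uad(S^\ast \mu_k)$, so the claim holds with $w := \mu_k \in Y$.

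I expect no genuine obstacle here: the assertion is essentially a reformulation of the subgradient identity $\lambda_k = S^\ast \mu_k \in \partial J(u_k)$ as a projection, and the only nontrivial ingredient is the (standard) equivalence between the metric projection and the normal-cone inclusion. In fact the argument never uses that $u_k$ solves \eqref{eq:2_main_problem}, so the representation $u_k = P_\Uad(S^\ast \mu_k)$ holds for every iterate; the hypothesis merely isolates the relevant case, namely that whenever the iteration reaches a solution, that solution automatically satisfies the source condition $u_k = P_\Uad(S^\ast w)$ invoked in Section \ref{sec:sc}.
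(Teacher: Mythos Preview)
Your argument is correct and is essentially the same as the paper's: both deduce $u_k = P_{\Uad}(\lambda_k) = P_{\Uad}(S^\ast \mu_k)$ from the optimality condition $\lambda_k = u_k + w_k$, $w_k \in \partial I_{\Uad}(u_k)$, together with $\lambda_k = S^\ast \mu_k$. The only small addition in the paper is that it treats the case $k=0$ separately (where $u_0 = P_{\Uad}(0) = P_{\Uad}(S^\ast 0)$), since the cited optimality condition and the identity \eqref{def:subdiff} are stated only for $k\ge 1$; you may want to add that one-line remark.
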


\begin{proof}
For $k=0$ this is true by the definition of $u_0 = P_\Uad(0) = P_\Uad(S^\ast(0))$. For $k \geq 1$ we obtain with the optimality condition
$$u_k = P_\Uad(\lambda_k)  = P_\Uad(S^\ast \mu_k),$$
which is the stated result.
\end{proof}

Let us now prove auxiliary results that exploits the choice of the subdifferential $\lambda_k$ in \eqref{def:subdiff}. They will be employed in the convergence rate estimates below.

\begin{lemma}\label{lemma:technique_proof_start}
Let $u^\dagger$ be a solution of \eqref{eq:2_main_problem}.
Then it holds
\begin{multline}\label{eq308}
\frac{1}{\alpha_k} D^{\lambda_k}(u^\dagger , u_k) + \frac{1}{2\,\alpha_k^2}\|S(u^\dagger - u_k)\|_Y^2 + \frac{1}{2}\|v_{k}\|_Y^2\\
\leq \frac{1}{\alpha_k}(u^\dagger, u^\dagger - u_k) + \frac{\gamma_k}{\alpha_k} ( p^\dagger ,  u_k - u^\dagger ) + \frac{1}{2}\|v_{k-1}\|_Y^2
\end{multline}
where $v_k$ is defined by
\begin{equation}\label{def:v_k}
v_k := \sum\limits_{i=1}^k \frac{1}{\alpha_i} S(u^\dagger - u_i).
\end{equation}
\end{lemma}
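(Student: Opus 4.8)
The plan is to prove \eqref{eq308} by a direct computation in which every term is carried along as an equality until a single trivial inequality closes the estimate. First I would rewrite the difference of the two squared $Y$-norms that straddle the inequality. From the definition \eqref{def:v_k} one has the recursion $v_k = v_{k-1} + \alpha_k^{-1} S(u^\dagger - u_k)$, so that
\[
\tfrac12\|v_k\|_Y^2 - \tfrac12\|v_{k-1}\|_Y^2 = \tfrac{1}{\alpha_k}\big(v_{k-1},\, S(u^\dagger - u_k)\big)_Y + \tfrac{1}{2\alpha_k^2}\|S(u^\dagger - u_k)\|_Y^2 .
\]
Substituting this into the claim and multiplying through by $\alpha_k>0$ eliminates both $v_k$ and $v_{k-1}$ from the norms and reduces \eqref{eq308} to the equivalent inequality
\[
D^{\lambda_k}(u^\dagger,u_k) + \tfrac{1}{\alpha_k}\|S(u^\dagger - u_k)\|_Y^2 + \big(v_{k-1}, S(u^\dagger - u_k)\big)_Y \le (u^\dagger, u^\dagger - u_k) + \gamma_k(p^\dagger, u_k - u^\dagger).
\]

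Next I would convert the surviving $v_{k-1}$-term into quantities already present on the right-hand side. Comparing the definitions of $\mu_k$, $v_k$ and $\gamma_k$ gives the identity $\mu_{k-1} = v_{k-1} + \gamma_{k-1}(z - Su^\dagger)$; applying $S^\ast$ and using $\lambda_{k-1} = S^\ast\mu_{k-1}$ (Lemma \ref{lemma:Algo_well_posed}) together with $p^\dagger = S^\ast(z - Su^\dagger)$ yields $S^\ast v_{k-1} = \lambda_{k-1} - \gamma_{k-1}p^\dagger$, hence
\[
\big(v_{k-1}, S(u^\dagger - u_k)\big)_Y = (\lambda_{k-1}, u^\dagger - u_k) - \gamma_{k-1}(p^\dagger, u^\dagger - u_k).
\]
Inserting this and collecting the two $p^\dagger$-terms by means of the telescoping relation $\gamma_k - \gamma_{k-1} = \alpha_k^{-1}$, the claim reduces further to
\[
D^{\lambda_k}(u^\dagger,u_k) + \tfrac{1}{\alpha_k}\|S(u^\dagger - u_k)\|_Y^2 + (\lambda_{k-1}, u^\dagger - u_k) \le (u^\dagger, u^\dagger - u_k) - \tfrac{1}{\alpha_k}(p^\dagger, u^\dagger - u_k).
\]

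Finally I would expand the Bregman distance and invoke the optimality condition. Since $u^\dagger, u_k \in \Uad$, formula \eqref{eq:Bregman_distance} gives $D^{\lambda_k}(u^\dagger,u_k) = \tfrac12\|u^\dagger - u_k\|^2 - (w_k, u^\dagger - u_k)$, while solving the optimality relation of Lemma \ref{lemma:opt_alg} for $w_k$ gives $w_k = \lambda_{k-1} - u_k + \alpha_k^{-1}S^\ast(z - Su_k)$. Writing $z - Su_k = (z - Su^\dagger) + S(u^\dagger - u_k)$ and using $S^\ast(z - Su^\dagger) = p^\dagger$ turns $(w_k, u^\dagger - u_k)$ into exactly the combination of the $\lambda_{k-1}$-, $p^\dagger$- and $\|S(u^\dagger - u_k)\|_Y^2$-terms appearing in the reduced inequality. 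After these cancellations the estimate collapses to $\tfrac12\|u^\dagger - u_k\|^2 \le \|u^\dagger - u_k\|^2$, which is obviously true.

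I expect the computation to involve no genuine inequality apart from this last trivial one; in particular, the normal-cone sign condition $(w_k, u^\dagger - u_k) \le 0$ is not needed here, and the slack in \eqref{eq308} is precisely $\tfrac12\|u^\dagger - u_k\|^2$. The only real difficulty is the bookkeeping: keeping track of the interplay between $v_k$, $\mu_k$, $\gamma_k$ and the adjoint state $p^\dagger$, and correctly deploying the two telescoping identities $v_k - v_{k-1} = \alpha_k^{-1}S(u^\dagger - u_k)$ and $\gamma_k - \gamma_{k-1} = \alpha_k^{-1}$ so that the scattered $p^\dagger$-contributions recombine into the single factor $\gamma_k/\alpha_k$ on the right-hand side.
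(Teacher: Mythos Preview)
Your argument is correct. The slack you identify, $\tfrac12\|u^\dagger-u_k\|^2$, is exactly the slack in the paper's proof as well, and no sign condition on $w_k$ is used in either case.

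The route is genuinely different in its organization, though. The paper starts from the ``symmetrized'' Bregman identity
\[
D^{\lambda_k}(u^\dagger,u_k)+D^{u^\dagger}(u_k,u^\dagger)=(u^\dagger-\lambda_k,\,u^\dagger-u_k),
\]
expands $(-\lambda_k,u^\dagger-u_k)$ via $\lambda_k=S^\ast\mu_k$ to produce $(-v_k,v_k-v_{k-1})+\tfrac{\gamma_k}{\alpha_k}(p^\dagger,u_k-u^\dagger)$, applies the polarization identity to obtain the two squared norms, and then discards the nonnegative term $D^{u^\dagger}(u_k,u^\dagger)=\tfrac12\|u^\dagger-u_k\|^2$. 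You instead work backward from the claim: you first use the recursion $v_k=v_{k-1}+\alpha_k^{-1}S(u^\dagger-u_k)$ to eliminate the norm difference, then the relation $S^\ast v_{k-1}=\lambda_{k-1}-\gamma_{k-1}p^\dagger$, and finally the explicit formula for $w_k$ from Lemma~\ref{lemma:opt_alg}. What the paper's organization buys is brevity and a transparent origin for the slack (it is a named Bregman distance); what yours buys is that every step is a concrete algebraic identity, and the role of the optimality condition for $u_k$ is made explicit rather than being hidden inside the definition of $\lambda_k$.
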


\begin{proof}
First notice that $u^\dagger \in \partial J(u^\dagger)$ holds, which follows from
$$u^\dagger = u^\dagger + 0 \in \partial\left( \frac{1}{2}\|\cdot\|^2 \right)(u^\dagger) + \partial I_\Uad (u^\dagger) \subseteq \partial J(u^\dagger).$$
As in the proof of \cite[Theorem 4.1]{burger2007},
we consider the sum of the Bregman distances
\[
 \frac{1}{\alpha_k} D^{\lambda_k}(u^\dagger, u_k) + \frac{1}{\alpha_k} D^{u^\dagger}(u_k, u^\dagger) = \frac{1}{\alpha_k}( u^\dagger - \lambda_k, u^\dagger - u_k ).
\]
Using the definitions of $v_k$ and $p^\dagger$, we obtain
\begin{align*}
  \frac{1}{\alpha_k}(  - \lambda_k, u^\dagger - u_k )
&=  \frac{1}{\alpha_k} \left( \sum\limits_{j=1}^k \frac{1}{\alpha_j}(Su_j - z)  , S(u^\dagger - u_k)   \right)\\
&=  \frac{1}{\alpha_k} \left( \sum\limits_{j=1}^k \frac{1}{\alpha_j}(S(u_j - u^\dagger + u^\dagger) - z)  , S(u^\dagger - u_k)   \right)\\
&=  (-v_k,v_k-v_{k-1}) + \frac{1}{\alpha_k}\sum\limits_{j=1}^k \frac{1}{\alpha_j} ( Su^\dagger - z, S(u^\dagger - u_k) )\\
&=  (-v_k, v_k - v_{k-1}) + \frac{\gamma_k}{\alpha_k} ( p^\dagger ,  u_k - u^\dagger ).
\end{align*}
We continue with transforming the first addend on the right-hand side
\[
\begin{split}
 (-v_{k}, v_{k}-v_{k-1}) &= \frac{1}{2}\|v_{k-1}\|_Y^2 - \frac{1}{2}\|v_{k}\|_Y^2 - \frac{1}{2}\|v_{k}-v_{k-1}\|_Y^2\\
 &= \frac{1}{2}\|v_{k-1}\|_Y^2 - \frac{1}{2}\|v_{k}\|_Y^2 - \frac{1}{2\alpha_k^2}\|S(u^\dagger - u_k)\|_Y^2.
 \end{split}
\]
We obtain the result by using the nonnegativity of $D^{u^\dagger}(u_k, u^\dagger)$.
\end{proof}

Estimate \eqref{eq308} will play a key role in the convergence analysis of the algorithm. The principal idea is to sum the inequality \eqref{eq308}
with respect to $k$. Using the monotonicity of the Bregman distance $D^{\lambda_k}(u^\dagger , u_k)$ and inequality \eqref{eq202}, we can then conclude convergence of the iterates
if we succeed in estimating the terms involving the scalar product $(u^\dagger, u^\dagger - u_k)$. Note that due to Theorem \ref{thm:relation_ud_pd} the term $( p^\dagger ,  u_k - u^\dagger )$
is non-positive.

\section{Convergence of the Bregman iteration}
\label{sec4}

In this section we  study  convergence of the iterates $(u_k)_k$ of algorithm \ref{alg:Min2}.

\subsection{General convergence results}
First we present a general convergence result.

\begin{theorem}\label{thm:weak_limit_points}
Weak limit points of the sequence $(u_k)_k$ generated by algorithm \ref{alg:Min2} are solutions to the problem \eqref{eq:2_main_problem}. Furthermore the iterates satisfy
$$\sum\limits_{i=1}^\infty \|u_{i}-u_{i-1}\|^2 < \infty.$$
\end{theorem}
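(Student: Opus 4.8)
The plan is to split the statement into its two parts and treat each using the machinery already assembled, chiefly Theorem \ref{thm:H_convergence} together with the lower bound \eqref{eq202}. I expect the summability estimate to follow almost immediately, while the characterization of the weak limit points will rest on a standard weak lower semicontinuity argument.

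First I would establish the second claim. Applying \eqref{eq202} with $u = u_i$, $v = u_{i-1}$ and $\lambda = \lambda_{i-1}$ gives $\frac{1}{2}\|u_i - u_{i-1}\|^2 \le D^{\lambda_{i-1}}(u_i, u_{i-1})$ for every $i$. Summing over $i$ and invoking the bound $\sum_{i=1}^\infty D^{\lambda_{i-1}}(u_i, u_{i-1}) < \infty$ from Theorem \ref{thm:H_convergence} yields $\sum_{i=1}^\infty \|u_i - u_{i-1}\|^2 < \infty$ at once.

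For the first claim I would argue as follows. Since $u_a, u_b \in L^\infty(\Omega)$ and $\Omega$ is bounded, the admissible set $\Uad$ is bounded in $L^2(\Omega)$; hence the iterates $(u_k)_k \subseteq \Uad$ are bounded and weak limit points exist. Let $\bar u$ be such a limit point, with $u_{k_j} \rightharpoonup \bar u$ along a subsequence. Because $\Uad$ is convex and closed it is weakly sequentially closed, so $\bar u \in \Uad$. The functional $H(u) = \frac{1}{2}\|Su - z\|_Y^2$ is convex and continuous, hence weakly lower semicontinuous, so that $H(\bar u) \le \liminf_{j \to \infty} H(u_{k_j})$. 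By Theorem \ref{thm:H_convergence} the full sequence satisfies $H(u_k) \to H(u^\dagger)$, whence the right-hand side equals $H(u^\dagger)$. This gives $H(\bar u) \le H(u^\dagger)$; since $u^\dagger$ minimizes $H$ over $\Uad$ and $\bar u \in \Uad$, the reverse inequality $H(u^\dagger) \le H(\bar u)$ also holds. Consequently $H(\bar u) = H(u^\dagger)$, and $\bar u$ solves \eqref{eq:2_main_problem}.

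There is no genuinely hard step here; the proof is a direct exploitation of results already in hand. The one point requiring care is the interplay of topologies: one must use the weak lower semicontinuity of $H$ (rather than continuity, which fails in the weak topology) in combination with the a priori convergence $H(u_k) \to H(u^\dagger)$ furnished by Theorem \ref{thm:H_convergence}, and the weak closedness of the convex set $\Uad$. The boundedness of $\Uad$, which guarantees the existence of weak limit points in the first place, is what renders the statement nonvacuous.
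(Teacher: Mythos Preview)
Your proof is correct and follows essentially the same route as the paper: weak lower semicontinuity of $H$ combined with $H(u_k)\to H(u^\dagger)$ from Theorem \ref{thm:H_convergence} for the first claim, and the lower bound \eqref{eq202} together with the summability of the Bregman distances from Theorem \ref{thm:H_convergence} for the second. The only difference is cosmetic ordering and a slightly more explicit treatment of the reverse inequality $H(u^\dagger)\le H(\bar u)$.
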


\begin{proof}
Since $L^2(\Omega)$ is a Hilbert space and $\Uad$ is bounded, closed and convex, it is weakly relatively compact and weakly closed. Hence we can deduce the existence of a subsequence $u_{k_j} \rightharpoonup u^\ast \in \Uad$. Furthermore $H$ is convex and continuous, so it is weakly lower semi-continuous. By Theorem \ref{thm:H_convergence} we know that the sequence $(H(u_k))_k$ is converging towards $H(u^\dagger)$, hence we obtain
$$H(u^\dagger)= \liminf\limits_{j \to \infty} H(u_{k_j}) \geq H(u^\ast),$$
yielding $H(u^\dagger) = H(u^\ast)$, since $u^\dagger$ realizes the minimum of $H$ in $\Uad$. So $u^\ast$ is a solution to the problem \eqref{eq:2_main_problem}.
To prove the second part we use \eqref{eq202} and the result of Theorem \ref{thm:H_convergence}
to show
\[
\sum\limits_{i=1}^\infty \frac{1}{2}\|u_{i}-u_{i-1}\|^2
\le \sum\limits_{i=1}^\infty D^{\lambda_{i-1}}(u_i,u_{i-1}) < \infty,
\]
which ends the proof.
\end{proof}

\begin{remark}
The above result resembles properties of the iterates generated by the PPM.
There it holds $\sum_{i=1}^\infty \|u_{i}-u_{i-1}\|^2 < \infty$, see e.g. \cite{Solodov00}.
\end{remark}

As argued in Section \ref{sec21}, the optimal state $y^\dagger$ of \eqref{eq:2_main_problem} is uniquely determined.
This allows to prove the strong convergence $(Su_k)$ under mild conditions on the parameters $\alpha_k$.

\begin{theorem}
Let the sequence $(u_k)_k$ be generated by algorithm \ref{alg:Min2}. Then it holds \[Su_k\to y^\dagger,\]
where $y^\dagger$ is the uniquely determined optimal state of \eqref{eq:2_main_problem}.
\end{theorem}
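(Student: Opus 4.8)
The goal is to upgrade the two facts we already have---namely that $H(u_k)\to H(u^\dagger)$ (Theorem~\ref{thm:H_convergence}) and that every weak limit point of $(u_k)_k$ solves \eqref{eq:2_main_problem} (Theorem~\ref{thm:weak_limit_points})---into the strong statement $Su_k\to y^\dagger$ in $Y$. The key observation is that $y^\dagger=Su^\dagger$ is \emph{uniquely determined} by the strict convexity of $H$ in its first argument, even though the minimizer $u^\dagger$ itself need not be unique. I would first record that $H(u_k)\to H(u^\dagger)$ can be rewritten as $\tfrac12\|Su_k-z\|_Y^2\to\tfrac12\|y^\dagger-z\|_Y^2$, so the sequence $(Su_k)_k$ is bounded in $Y$.

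The plan is then to run a subsequence--uniqueness argument on the weak topology of $Y$. Since $(Su_k)_k$ is bounded in the Hilbert space $Y$, every subsequence has a further subsequence converging weakly to some $y^\ast\in Y$. Along such a subsequence, using that $(u_k)_k\subseteq\Uad$ is bounded (so a sub-subsequence converges weakly in $L^2(\Omega)$ to some $u^\ast\in\Uad$) together with the weak continuity of $S$, I can identify $y^\ast=Su^\ast$; by Theorem~\ref{thm:weak_limit_points} the limit $u^\ast$ is a solution, hence $Su^\ast=y^\dagger$ by uniqueness of the optimal state. Thus every weak cluster point of $(Su_k)_k$ equals $y^\dagger$, and boundedness forces $Su_k\rightharpoonup y^\dagger$ weakly in $Y$.

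To pass from weak to strong convergence I would exploit the norm convergence coming from $H$. Weak convergence $Su_k\rightharpoonup y^\dagger$ combined with $\|Su_k-z\|_Y\to\|y^\dagger-z\|_Y$ gives strong convergence by the standard Hilbert-space fact that weak convergence plus convergence of norms implies strong convergence. Concretely, expanding
\[
\|Su_k-y^\dagger\|_Y^2=\|Su_k-z\|_Y^2-\|y^\dagger-z\|_Y^2+2\,(Su_k-y^\dagger,\,z-y^\dagger)_Y,
\]
the first difference tends to $0$ by Theorem~\ref{thm:H_convergence}, and the inner-product term tends to $0$ because $Su_k-y^\dagger\rightharpoonup 0$ weakly and $z-y^\dagger$ is fixed. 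Hence $\|Su_k-y^\dagger\|_Y\to 0$.

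The step I expect to be the main obstacle is the identification of the weak limit: one must ensure that \emph{every} weak cluster point of $(Su_k)_k$ is the \emph{same} point $y^\dagger$, which is exactly where uniqueness of the optimal state (rather than the control) is indispensable, and where Theorem~\ref{thm:weak_limit_points} must be invoked carefully along nested subsequences. Once weak convergence of the full sequence is secured, the weak-plus-norm-implies-strong argument is routine.
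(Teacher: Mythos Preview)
Your proposal is correct and follows essentially the same approach as the paper: establish $Su_k\rightharpoonup y^\dagger$ via a subsequence argument using Theorem~\ref{thm:weak_limit_points} and uniqueness of the optimal state, then upgrade to strong convergence via the Hilbert-space fact that weak convergence together with convergence of norms (here $\|Su_k-z\|_Y\to\|y^\dagger-z\|_Y$ from Theorem~\ref{thm:H_convergence}) yields strong convergence. The only cosmetic difference is that the paper extracts weakly convergent subsequences directly from $(u_k)_k$ in $L^2(\Omega)$ (using boundedness of $U_{\mathrm{ad}}$) and then applies $S$, whereas you first pass to $(Su_k)_k$ in $Y$ and then go back to a sub-subsequence of $(u_k)_k$; both routes arrive at the same identification $y^\ast=y^\dagger$.
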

\begin{proof}
Let $(u_{k'})_{k'}$ be a subsequence of the sequence of iterates. Due to the boundedness of $U_{ad}$, this sequence is bounded, and has
a weakly converging subsequence $(u_{k''})_{k''}$, $u_{k''}\to u^*$. By Theorem \ref{thm:weak_limit_points}, the limit $u^*$ is a solution of
\eqref{eq:2_main_problem}. This implies $Su^*=y^\dagger$.
Hence, we proved that each subsequence of $(Su_k)_k$ contains a subsequence that weakly converges to $y^\dagger$.
This shows $Su_k \rightharpoonup y^\dagger$.

Due to Theorem \ref{thm:H_convergence} and $\gamma_k^{-1}\to0$, we have that
\[
 H(u_k) = \frac12\|Su_k-z\|_Y^2 \to \frac12 \|y^\dagger-z\|_Y^2 = H(u^\dagger)
\]
for every solution $u^\dagger$ of \eqref{eq:2_main_problem}. This implies convergence of the norms $\|Su_k\|_Y\to \|y^\dagger\|_Y$.
Since $Y$ is a Hilbert space, the strong convergence $Su_k\to y^\dagger$ follows immediately.
\end{proof}

If we assume that the problem \eqref{eq:2_main_problem} has a unique solution $u^\dagger \in \Uad$ we can prove strong convergence of our algorithm.

As argued above, the solution of  \eqref{eq:2_main_problem} is uniquely determined if,
e.g., the operator $S$ is injective or $p^\dagger \neq 0$ almost everywhere.

\begin{theorem}\label{thm:unique_weak_conv}
Assume that $u^\dagger \in \Uad$ is the unique solution of \eqref{eq:2_main_problem}. Then the iterates of algorithm \ref{alg:Min2} satisfy
$$ \lim\limits_{k \to \infty} \| u_k - u^\dagger\| = 0 \quad \text{and} \quad \min\limits_{j=1,...,k} \frac{1}{\alpha_j}\|S(u_j -u^\dagger)\|_Y^2 \to 0.$$
\end{theorem}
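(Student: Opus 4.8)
The plan is to first establish weak convergence of the entire sequence and then to upgrade it to strong convergence by showing that the monotone Bregman distances $D^{\lambda_k}(u^\dagger,u_k)$ tend to zero, after which \eqref{eq202} closes the argument. The second (minimum) claim will follow from the same summed inequality.

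First I would argue weak convergence $u_k \rightharpoonup u^\dagger$. Since $\Uad$ is bounded, closed, and convex, the sequence $(u_k)_k$ is relatively weakly compact. By Theorem \ref{thm:weak_limit_points} every weak limit point solves \eqref{eq:2_main_problem}, and uniqueness of $u^\dagger$ forces every weakly convergent subsequence to share the limit $u^\dagger$; hence the whole sequence converges weakly. In particular, putting $\beta_k := (u^\dagger, u^\dagger - u_k)$ we obtain $\beta_k \to 0$.

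The core of the proof is the summation of the key estimate \eqref{eq308}. By the variational inequality of Theorem \ref{thm:relation_ud_pd} the term $(p^\dagger, u_k - u^\dagger)$ is non-positive, so the corresponding summand on the right-hand side of \eqref{eq308} may be dropped while keeping the inequality valid. Summing from $k=1$ to $N$, the terms $\frac12\|v_k\|_Y^2$ telescope and $v_0=0$, which yields
$$\sum_{k=1}^N \frac{1}{\alpha_k} D^{\lambda_k}(u^\dagger,u_k) + \sum_{k=1}^N \frac{1}{2\alpha_k^2}\|S(u^\dagger-u_k)\|_Y^2 + \frac12\|v_N\|_Y^2 \le \sum_{k=1}^N \frac{1}{\alpha_k}\beta_k.$$
Since $\beta_k \to 0$, applying Lemma \ref{lemma:gamma_sum} to $|\beta_k|$ gives $\gamma_N^{-1}\sum_{k=1}^N \alpha_k^{-1}\beta_k \to 0$. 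As all three terms on the left are non-negative, this single inequality drives both assertions.

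For the strong convergence I would exploit the monotonicity $D^{\lambda_k}(u^\dagger,u_k)\le D^{\lambda_{k-1}}(u^\dagger,u_{k-1})$ of Theorem \ref{thm:H_convergence}: bounding each summand below by the last one gives $\gamma_N D^{\lambda_N}(u^\dagger,u_N) \le \sum_{k=1}^N \alpha_k^{-1}\beta_k$, and dividing by $\gamma_N$ shows $D^{\lambda_N}(u^\dagger,u_N)\to 0$; then \eqref{eq202} yields $\|u_N-u^\dagger\|\to 0$. For the second claim I would bound the minimum below by a weighted average, $\gamma_N \min_{j\le N}\frac{1}{\alpha_j}\|S(u_j-u^\dagger)\|_Y^2 \le \sum_{k=1}^N \frac{1}{\alpha_k^2}\|S(u_k-u^\dagger)\|_Y^2 \le 2\sum_{k=1}^N \alpha_k^{-1}\beta_k$, and again divide by $\gamma_N$. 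The main obstacle is the sign bookkeeping: because $\beta_k$ need not be non-negative, Lemma \ref{lemma:gamma_sum} must be applied to $|\beta_k|$, and one must check that discarding the $(p^\dagger,\cdot)$ summand and telescoping the $\|v_k\|_Y^2$ terms both preserve the inequality in the direction needed to isolate $D^{\lambda_N}(u^\dagger,u_N)$.
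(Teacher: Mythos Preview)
Your proposal is correct and follows essentially the same route as the paper: weak convergence via Theorem~\ref{thm:weak_limit_points} and uniqueness, then summing \eqref{eq308} with the $(p^\dagger,\cdot)$ term dropped by Theorem~\ref{thm:relation_ud_pd}, telescoping $\|v_k\|_Y^2$, invoking monotonicity of $D^{\lambda_k}(u^\dagger,u_k)$ together with \eqref{eq202}, and finishing with Lemma~\ref{lemma:gamma_sum}. Your explicit remark that Lemma~\ref{lemma:gamma_sum} must be applied to $|\beta_k|$ because $\beta_k=(u^\dagger,u^\dagger-u_k)$ need not be non-negative is a detail the paper leaves implicit; it is indeed needed and your handling of it is correct.
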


\begin{proof}
With Theorem \ref{thm:weak_limit_points} we know that each weak limit point is a solution to the problem \eqref{eq:2_main_problem}. So let $u^\ast$ be such a point which satisfy $H(u^\dagger) = H(u^\ast)$. As $u^\dagger$ is the unique solution we conclude $u^\ast = u^\dagger$. From every subsequence of $(u_k)_k$ we can extract a weakly converging subsequence and repeat this argumentation. Hence we can conclude weak convergence
$u_k \rightharpoonup u^\dagger$ of the whole sequence.

With Lemma \ref{lemma:technique_proof_start} and Theorem \ref{thm:relation_ud_pd} we obtain
\[
\frac{1}{2\,\alpha_k^2}\|S(u^\dagger - u_k)\|_Y^2 + \frac{1}{\alpha_k} D^{\lambda_k}(u^\dagger, u_k)+\frac{1}{2}\|v_k\|_Y^2 \leq \frac{1}{\alpha_k}(u^\dagger, u^\dagger - u_k) + \frac{1}{2}\|v_{k-1}\|_Y^2 .
\]
Summing up yields
\[
\sum \limits_{j=1}^k \frac{1}{2\,\alpha_j^2} \|S(u^\dagger - u_j)\|_Y^2 + \sum\limits_{j=1}^k \frac{1}{\alpha_j}  D^{\lambda_j}(u^\dagger, u_j) \leq \sum\limits_{j=1}^k \alpha_j^{-1} (u^\dagger , u^\dagger - u_j).
\]
where we used the convention $v_0=0$. We now use the monotonicity of $D^{\lambda_k}(u^\dagger, u_k)$ (see Theorem \ref{thm:H_convergence}) and the estimate
$\frac{1}{2} \|u^\dagger-u_k\|^2 \leq D^{\lambda_k}(u^\dagger , u_k)$
to obtain
$$\min\limits_{j=1,...,k} \frac{1}{\alpha_j} \|S(u^\dagger - u_j)\|_Y^2 + \|u^\dagger - u_k\|_Y^2 \leq 2\gamma_k^{-1} \sum\limits_{j=1}^k \alpha_j^{-1} (u^\dagger, u^\dagger - u_j).$$
We finally obtain the result by using the weak convergence $u_k \rightharpoonup u^\dagger$ and Lemma~\ref{lemma:gamma_sum}.
\end{proof}

\subsection{Strong convergence for the Source Condition}
\label{sec:sc}
A common assumption on a solution $u^\dagger$ is the following source condition, which is an abstract smoothness condition (see \cite{burger2007,neubauer1988,wachsmuth2011,wachsmuth2011b}). We say $u^\dagger$ satisfies the source condition \ref{ass:SC} if the following assumption holds.
{
\renewcommand{\thetheorem}{\textbf{SC}}
\begin{assumption}[Source Condition]\label{ass:SC}
Let $u^\dagger$ be a solution of \eqref{eq:2_main_problem}.
Assume that there exists an element $w \in Y$ such that $u^\dagger = P_\Uad(S^\ast w)$ holds.
\end{assumption}
}

The source condition is equivalent to the existence of Lagrange multipliers for the problem
\begin{equation}\label{eq:mini_norm}\begin{split}
\min\limits_{u \in \Uad} \quad & \frac{1}{2} \|u\|^2\\
\text{such that} \quad & Su = y^\dagger,
\end{split}
\end{equation}
where $y^\dagger$ is the uniquely defined optimal state of \eqref{eq:2_main_problem}.
To see this, consider the Lagrange function
$$\mathcal{L}(u,w) := \frac{1}{2}\|u\|^2 + (w, y^\dagger - Su).$$
For every $u^\dagger$ satisfying $Su^\dagger = y^\dagger$ we obtain
$$\frac{\partial}{\partial w} \mathcal{L}(u^\dagger, w^\dagger) = y^\dagger - Su^\dagger = 0.$$
This means, the function $w^\dagger$ is a Lagrange multiplier if and only if:
\begin{align*}
&\frac{\partial}{\partial u} \mathcal{L}(u^\dagger, w^\dagger) (v-u^\dagger) \geq 0 \quad \forall v \in \Uad\\
\iff & (u^\dagger - S^\ast w^\dagger , v-u^\dagger) \geq 0\quad \forall v \in \Uad\\
\iff & u^\dagger = P_\Uad(S^\ast w^\dagger)
\end{align*}
Hence, if the control $u^\dagger$ satisfies \ref{ass:SC} then it is a solution of \eqref{eq:mini_norm}.
Moreover, as this optimization problem is uniquely solvable, it follows
that there is at most one control satisfying \ref{ass:SC}.
Note that the existence of Lagrange multipliers is not guaranteed in general, as
in many situations the operator $S$ is compact and has non-closed range.\\

Under this assumption we can prove strong convergence of algorithm \ref{alg:Min2}.

\begin{theorem}\label{thm:SC_strong_conv}
Assume that Assumption \ref{ass:SC} holds for $u^\dagger$. Then the iterates of algorithm \ref{alg:Min2} satisfy
\begin{align*}
\|u_k - u^\dagger\|^2 &= \mathcal{O}(\gamma_k^{-1})\\
\min\limits_{i=1,...,k} \|S(u_i - u^\dagger)\|_Y^2 &= \mathcal{O} \left( \left(\sum_{i=1}^k \alpha_i^{-2}\right)^{-1} \right).
\end{align*}
\end{theorem}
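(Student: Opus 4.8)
The plan is to start from the master estimate \eqref{eq308} of Lemma \ref{lemma:technique_proof_start}, sum it over the iteration index exactly as in the proof of Theorem \ref{thm:unique_weak_conv}, and then exploit Assumption \ref{ass:SC} to control the right-hand side \emph{sharply} rather than merely qualitatively. First I would invoke Theorem \ref{thm:relation_ud_pd} to note that $(p^\dagger, u_k - u^\dagger) \leq 0$, so the term $\frac{\gamma_k}{\alpha_k}(p^\dagger, u_k - u^\dagger)$ may be dropped. Summing the resulting inequality from $j=1$ to $k$ makes the $\frac12\|v_j\|_Y^2$ contributions telescope (recall $v_0 = 0$), leaving
\[
\sum_{j=1}^k \frac{1}{\alpha_j} D^{\lambda_j}(u^\dagger, u_j) + \sum_{j=1}^k \frac{1}{2\alpha_j^2}\|S(u^\dagger - u_j)\|_Y^2 + \frac12\|v_k\|_Y^2 \leq \sum_{j=1}^k \frac{1}{\alpha_j}(u^\dagger, u^\dagger - u_j).
\]

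The decisive step is to estimate the right-hand side using the source condition. The identity $u^\dagger = P_\Uad(S^\ast w)$ is equivalent to the variational inequality $(S^\ast w - u^\dagger, v - u^\dagger) \leq 0$ for all $v \in \Uad$; choosing $v = u_j$ and rearranging yields $(u^\dagger, u^\dagger - u_j) \leq (S^\ast w, u^\dagger - u_j) = (w, S(u^\dagger - u_j))$. Multiplying by $\alpha_j^{-1}$ and summing, the right-hand side is bounded, by the very definition of $v_k$ in \eqref{def:v_k}, by $(w, v_k) \leq \|w\|_Y\|v_k\|_Y$. A Young inequality $\|w\|_Y\|v_k\|_Y \leq \frac12\|w\|_Y^2 + \frac12\|v_k\|_Y^2$ then absorbs the telescoped $\frac12\|v_k\|_Y^2$ term into the left-hand side, giving the clean $k$-uniform a priori bound
\[
\sum_{j=1}^k \frac{1}{\alpha_j} D^{\lambda_j}(u^\dagger, u_j) + \sum_{j=1}^k \frac{1}{2\alpha_j^2}\|S(u^\dagger - u_j)\|_Y^2 \leq \frac12\|w\|_Y^2.
\]

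From here the two claimed rates follow by elementary extraction from the two nonnegative sums. For the first, I would use the monotonicity of $D^{\lambda_k}(u^\dagger, u_k)$ from Theorem \ref{thm:H_convergence} to pull the last term out of the weighted sum, $\gamma_k D^{\lambda_k}(u^\dagger, u_k) \leq \sum_{j=1}^k \alpha_j^{-1} D^{\lambda_j}(u^\dagger, u_j) \leq \frac12\|w\|_Y^2$, and then the lower bound \eqref{eq202}, $\frac12\|u_k - u^\dagger\|^2 \leq D^{\lambda_k}(u^\dagger, u_k)$, to conclude $\|u_k - u^\dagger\|^2 \leq \|w\|_Y^2\,\gamma_k^{-1}$. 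For the second, I would bound the minimum by the weighted average via $\min_{i} \|S(u_i - u^\dagger)\|_Y^2 \cdot \sum_{j=1}^k \alpha_j^{-2} \leq \sum_{j=1}^k \alpha_j^{-2}\|S(u^\dagger - u_j)\|_Y^2 \leq \|w\|_Y^2$, which is precisely the stated $\mathcal{O}\big((\sum_i \alpha_i^{-2})^{-1}\big)$ estimate.

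I expect the only genuinely delicate point to be the translation of the source condition into the pairing estimate $(u^\dagger, u^\dagger - u_j) \leq (w, S(u^\dagger - u_j))$. This is where the abstract smoothness assumption does its work: it converts the otherwise uncontrolled $L^2$-term $(u^\dagger, u^\dagger - u_j)$ into a quantity living in the range of $S$, expressible through $v_k$, which is exactly the object the telescoped $\frac12\|v_k\|_Y^2$ can absorb. By contrast, in Theorem \ref{thm:unique_weak_conv} this same term could only be handled qualitatively through Lemma \ref{lemma:gamma_sum}, yielding convergence without a rate; the source condition is precisely what upgrades this to the explicit orders above. Everything after that pairing estimate is bookkeeping with nonnegative terms and the already-established monotonicity.
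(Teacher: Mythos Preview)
Your proof is correct and follows essentially the same route as the paper: start from \eqref{eq308}, drop the nonpositive $(p^\dagger,u_k-u^\dagger)$ term, convert $(u^\dagger,u^\dagger-u_k)$ into $(w,S(u^\dagger-u_k))$ via the projection inequality from \ref{ass:SC}, telescope, and extract the two rates using monotonicity of the Bregman distance and \eqref{eq202}. The only cosmetic difference is that the paper completes the square to $\frac12\|v_k-w\|_Y^2$ \emph{before} summing (obtaining a telescoping sum in $\|v_j-w\|_Y^2$ and hence retaining $\frac12\|v_k-w\|_Y^2$ on the left, which is later reused in Corollary~\ref{coro410}), whereas you sum first and then apply Young's inequality, which discards that extra information but is fully sufficient for the present statement.
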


\begin{proof}
From Lemma \ref{lemma:technique_proof_start} we know
\[
\frac{1}{\alpha_k} D^{\lambda_k}(u^\dagger , u_k) + \frac{1}{2\,\alpha_k^2}\|S(u^\dagger - u_k)\|_Y^2 + \frac{1}{2}\|v_{k}\|_Y^2
\leq \frac{1}{\alpha_k}(u^\dagger, u^\dagger - u_k) + \frac{1}{2}\|v_{k-1}\|_Y^2.
\]
It remains to estimate $(u^\dagger, u^\dagger - u_k)$ with the help of the source condition.
By the definition of the projection $u^\dagger = P_\Uad(S^\ast w)$ we get
$$\big(u^\dagger- S^\ast w, v-u^\dagger\big) \geq 0 \quad \forall v \in \Uad.$$
Since $u_{k} \in \Uad$ we have
\[
\frac{1}{\alpha_k}\big(u^\dagger, u^\dagger - u_{k}\big) \leq \frac{1}{\alpha_k}\big(S^\ast w, u^\dagger - u_{k}\big)
= \frac{1}{\alpha_k} (w, S(u^\dagger-u_k))_Y = (w, v_k-v_{k-1}).
\]
Plugging this in the estimate above yields
\[
\frac{1}{\alpha_k} D^{\lambda_k}(u^\dagger , u_k) + \frac{1}{2\,\alpha_k^2}\|S(u^\dagger - u_k)\|_Y^2 + \frac{1}{2}\|v_{k}-w\|_Y^2
\leq   \frac{1}{2}\|v_{k-1}-w\|_Y^2.
\]
Following the lines of Theorem \ref{thm:unique_weak_conv} we obtain by a summation
$$\frac{1}{2} \sum \limits_{j=1}^k \frac{1}{\alpha_j^2} \|S(u^\dagger - u_j)\|_Y^2 + \frac{\gamma_k}{2} \|u^\dagger - u_k\|^2 + \frac{1}{2}\|v_k - w\|_Y^2 \leq \frac{1}{2}\|w\|_Y^2,$$
which yields the result.
\end{proof}

Under the source condition \ref{ass:SC} we can improve Lemma \ref{lemma:algo_stat}.
\begin{lemma}
Assume that $u^\dagger$ satisfies Assumption \ref{ass:SC}. If it holds $Su_k = y^\dagger$, then it follows $u_k = u^\dagger$.
\end{lemma}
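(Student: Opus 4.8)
The plan is to combine three results already established in the preceding pages: the stationarity statement of Lemma \ref{lemma:algo_stat}, the representation of attained solutions in Lemma \ref{lemma:Solution_character}, and the uniqueness of controls satisfying the source condition noted in the discussion following Assumption \ref{ass:SC}. The whole argument is a short chaining of these facts rather than a fresh computation.

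First I would observe that the hypothesis $Su_k = y^\dagger$ places us exactly in the setting of Lemma \ref{lemma:algo_stat}, which tells us that $u_k$ is itself a solution of \eqref{eq:2_main_problem}. This is the decisive step, since it upgrades $u_k$ from a mere iterate to an optimal control. Because $u_k$ is now known to solve \eqref{eq:2_main_problem}, Lemma \ref{lemma:Solution_character} applies and furnishes an element $\tilde w \in Y$ with $u_k = P_\Uad(S^\ast \tilde w)$. In other words, $u_k$ also satisfies the source condition \ref{ass:SC}.

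It then remains to invoke uniqueness. Both $u^\dagger$ (by assumption) and $u_k$ (by the previous step) satisfy \ref{ass:SC}, and, as explained after Assumption \ref{ass:SC}, every control satisfying the source condition is the unique solution of the minimum-norm problem \eqref{eq:mini_norm}; in particular at most one such control can exist. Hence $u_k = u^\dagger$. I do not anticipate any genuine obstacle here, as the argument merely assembles earlier results. The only point deserving a moment's care is confirming that the uniqueness assertion applies symmetrically to $u^\dagger$ and $u_k$—that is, that $u_k$ too solves \eqref{eq:mini_norm}—which is immediate once Lemma \ref{lemma:Solution_character} has supplied the representation of $u_k$ through a projection of an element of $R(S^\ast)$.
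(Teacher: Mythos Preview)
Your proposal is correct and follows essentially the same route as the paper's proof: use Lemma~\ref{lemma:Solution_character} to see that $u_k$ satisfies \ref{ass:SC}, and then appeal to the uniqueness of the minimum-norm solution \eqref{eq:mini_norm} to conclude $u_k=u^\dagger$. The only difference is cosmetic---the paper does not explicitly invoke Lemma~\ref{lemma:algo_stat} (the fact that $u_k$ solves \eqref{eq:2_main_problem} is immediate from $Su_k=y^\dagger$ and $u_k\in\Uad$), but your added reference does no harm.
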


\begin{proof}
As argued in Lemma \ref{lemma:Solution_character}, $u_k$ fulfills \ref{ass:SC}.
Hence both $u_k$ and $u^\dagger$ are solutions of the minimal norm problem \ref{eq:mini_norm}.
This problem is uniquely solvable, which yields $u_k=u^\dagger$.
\end{proof}

While the sequence $(\lambda_k)_k$ is unbounded in general, we can prove convergence of $\gamma_k^{-1}\lambda_k$,
which is a weighted average of the sequence $\big(S^*(z-Su_k)\big)_k$.

\begin{corollary}\label{coro410}
Assume that Assumption \ref{ass:SC} holds for $u^\dagger$.
Then it holds
\[
\left\|\gamma_k^{-1}\sum_{i=1}^k \frac{1}{\alpha_i} S(u_i-u^\dagger)\right\|_Y^2 +
\left\|\gamma_k^{-1} \lambda_k - p^\dagger \right\|^2 = \mathcal O(\gamma_k^{-2}) .
\]
\end{corollary}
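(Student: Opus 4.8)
The plan is to reduce both summands on the left-hand side to multiples of $\|v_k\|_Y$, where $v_k = \sum_{i=1}^k \frac{1}{\alpha_i}S(u^\dagger - u_i)$ is the quantity from \eqref{def:v_k}, and then to invoke a boundedness bound for $(v_k)_k$ that is already available from the proof of Theorem~\ref{thm:SC_strong_conv}. The first summand is immediate: by the definition of $v_k$ one has $\sum_{i=1}^k \frac{1}{\alpha_i}S(u_i-u^\dagger) = -v_k$, so that
\[
\left\|\gamma_k^{-1}\sum_{i=1}^k \frac{1}{\alpha_i} S(u_i-u^\dagger)\right\|_Y^2 = \gamma_k^{-2}\|v_k\|_Y^2 .
\]

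Next I would treat the second summand using $\lambda_k = S^\ast\mu_k$ (Lemma~\ref{lemma:Algo_well_posed}) and $p^\dagger = S^\ast(z - Su^\dagger)$. Since $\gamma_k = \sum_{i=1}^k \alpha_i^{-1}$, I would rewrite $p^\dagger$ with the partition of unity $1 = \gamma_k^{-1}\sum_{i=1}^k\alpha_i^{-1}$ and subtract term by term, so that the two occurrences of $z$ cancel:
\[
\gamma_k^{-1}\lambda_k - p^\dagger = \gamma_k^{-1}S^\ast\sum_{i=1}^k \frac{1}{\alpha_i}\big((z-Su_i)-(z-Su^\dagger)\big) = \gamma_k^{-1}S^\ast v_k .
\]
Boundedness of $S^\ast$ (with $\|S^\ast\| = \|S\|$) then gives $\|\gamma_k^{-1}\lambda_k - p^\dagger\|^2 \le \|S\|^2\gamma_k^{-2}\|v_k\|_Y^2$.

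It then remains only to bound $\|v_k\|_Y$ independently of $k$, and this is precisely where Assumption~\ref{ass:SC} enters. The final inequality established inside the proof of Theorem~\ref{thm:SC_strong_conv},
\[
\tfrac{1}{2}\sum_{j=1}^k \tfrac{1}{\alpha_j^2}\|S(u^\dagger - u_j)\|_Y^2 + \tfrac{\gamma_k}{2}\|u^\dagger - u_k\|^2 + \tfrac{1}{2}\|v_k - w\|_Y^2 \le \tfrac{1}{2}\|w\|_Y^2 ,
\]
yields in particular $\|v_k - w\|_Y \le \|w\|_Y$, hence $\|v_k\|_Y \le 2\|w\|_Y$ for all $k$. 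Combining this with the two displays above gives
\[
\left\|\gamma_k^{-1}\sum_{i=1}^k \frac{1}{\alpha_i} S(u_i-u^\dagger)\right\|_Y^2 + \left\|\gamma_k^{-1}\lambda_k - p^\dagger\right\|^2 \le (1+\|S\|^2)\gamma_k^{-2}\|v_k\|_Y^2 \le 4(1+\|S\|^2)\|w\|_Y^2\,\gamma_k^{-2} ,
\]
which is the claimed $\mathcal O(\gamma_k^{-2})$ estimate.

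I expect no serious obstacle: the argument is essentially a bookkeeping reduction to $\|v_k\|_Y$ together with the a priori bound already derived under the source condition. The only point demanding care is the identity $\gamma_k^{-1}\lambda_k - p^\dagger = \gamma_k^{-1}S^\ast v_k$, i.e. inserting the partition of unity into $p^\dagger$ so that the $z$-terms cancel and the remaining sum collapses exactly to $v_k$; everything else follows immediately from the continuity of $S$ and the estimate of Theorem~\ref{thm:SC_strong_conv}.
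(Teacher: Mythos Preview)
Your proof is correct and follows essentially the same approach as the paper: both reduce the claim to the identity $\gamma_k^{-1}\lambda_k - p^\dagger = \gamma_k^{-1}S^\ast v_k$ and then use the bound $\|v_k\|_Y \le \|v_k-w\|_Y + \|w\|_Y \le 2\|w\|_Y$ obtained in the proof of Theorem~\ref{thm:SC_strong_conv}. Your write-up is slightly more explicit (spelling out the partition-of-unity cancellation and tracking the constant), but the argument is the same.
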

\begin{proof}

Due to the definitions of $\lambda_k$, $p^\dagger$, and $v_k$ it holds
 \[
  \gamma_k^{-1} \lambda_k - p^\dagger = \gamma_k^{-1} \left( \sum_{i=1}^k \frac{1}{\alpha_i} S^*S(u^\dagger-u_i)
  \right) = \gamma_k^{-1} S^* v_k.
 \]
Following the lines of the proof of Theorem \ref{thm:SC_strong_conv}, we obtain
\[
 \|v_k\|_Y \le \|v_k-w\|_Y + \|w\|_Y \le 2\|w\|_Y,
\]
which yields the claim.
\end{proof}

When comparing the convergence rates of Theorem \ref{thm:SC_strong_conv} and Corollary~\ref{coro410},
one sees that the norm of the weighted average $\gamma_k^{-1}\sum_{i=1}^k \frac{1}{\alpha_i} S(u_i-u^\dagger)$ converges faster to zero than $\min\limits_{i=1,...,k} \|S(u_i - u^\dagger)\|_Y$,
since it holds
$
 \gamma_k^2 = \left( \sum_{i=1}^k \alpha_i^{-1}\right)^2 > \sum_{i=1}^k \alpha_i^{-2}.
$

\subsection{Convergence results for the Active Set Condition}
\label{sec44}
If $z$ is not attainable, i.e., $y^\dagger\ne z$, a solution $u^\dagger$ may be bang-bang, i.e., $u^\dagger$ is a linear combination of characteristic functions, hence discontinuous in general with $u^\dagger \not\in H^1(\Omega)$. But in many examples the range of $S^\ast$ contains $H^1(\Omega)$ or $C(\bar \Omega)$. Hence, the source condition \ref{ass:SC} is too restrictive for bang-bang solutions.
We will thus resort to the following condition.
We say that $u^\dagger$ satisfies the active set condition \ref{ass:ActiveSet}, if the following assumption holds.
Let us recall the definition of $p^\dagger= S^\ast(z- Su^\dagger)$.
{
\renewcommand{\thetheorem}{\textbf{ASC}}
\begin{assumption}[Active Set Condition]\label{ass:ActiveSet}
Let $u^\dagger$ be a solution of \eqref{eq:2_main_problem} and assume that there exists a set $I \subseteq \Omega$, a function $w \in Y$, and positive constants $\kappa, c$ such that the following holds

\begin{enumerate}
  \item (source condition) $I \supset \{ x \in \Omega: \; p^\dagger(x) = 0 \}$  and
  $$\chi_I u^\dagger = \chi_I P_\Uad (S^\ast w),$$
  \item (structure of active set) $A := \Omega \setminus I$ and for all $\eps > 0$
  $$|\{ x\in A: \; 0 < |p^\dagger(x)| < \eps  \}| \leq c \eps^\kappa,$$
  \item (regularity of solution) $S^\ast w \in L^\infty(\Omega)$.
\end{enumerate}
\end{assumption}
}

\begin{remark}
Following \cite[Remark 3.1]{wachsmuth2011}, there exists at most one $u^\dagger \in \Uad$ satisfying Assumption \ref{ass:ActiveSet}. Furthermore by \cite[Remark 3.1]{wachsmuth2011} this has to be the minimal norm solution in $\Uad$, which is unique by \cite[Lemma 2.3]{wachsmuth2011}.
\end{remark}

This condition is used in \cite{wachsmuth2011}.
It was applied for the case $\kappa = 1$, $I = \emptyset$ and $A = \Omega$ in \cite{wachsmuth2011b}. The set $I$ contains the set $\{x \in \Omega: \; p^\dagger(x) = 0 \}$, which is the set of points where $u^\dagger(x)$ cannot be uniquely determined from $p^\dagger(x)$, compare to Theorem \ref{thm:relation_ud_pd}. On this set, we assume that $u^\dagger$ fulfills a local source condition, which implies that $u^\dagger$ has some extra regularity there. The set $A$ contains the points, where the inequality constraints are active, since it holds by construction that $p^\dagger(x) \neq 0$ on $A$, which implies $u^\dagger(x) \in \{ u_a(x), u_b(x) \}$.

In the following we will show convergence results for iterates produced by algorithm \ref{alg:Min2} if we assume \ref{ass:ActiveSet}. The special case $I = \Omega$ is already covered by Theorem \ref{thm:SC_strong_conv}, since for this choice of $I$ the Assumption \ref{ass:ActiveSet} reduces to the Assumption \ref{ass:SC}.

We now focus on the case $I \neq \Omega$, that is, if the source condition is not satisfied on the whole domain $\Omega$.

At first, let us prove a strengthened version of the first-order optimality conditions satisfied
by $u^\dagger$. We refer to \cite[Lemma 1.3]{seydenschwanz2015} for a different proof.

\begin{lemma}\label{lem31}
Let $u^\dagger$ satisfy Assumption \ref{ass:ActiveSet}. Then there is $c_A>0$ such that for all $u\in U_{ad}$
 \[
  (-p^\dagger, u-u^\dagger) \ge c_A\|u-u^\dagger\|_{L^1(A)}^{1+\frac1\kappa}
 \]
is satisfied.
\end{lemma}
\begin{proof}
Let $\eps>0$ be given. Let us define $A_\eps:=\{ x\in A:\ |p^\dagger(x)| \ge  \eps\}$.
Then it holds
\[\begin{split}
-\int\limits_{\Omega} p^\dagger(u-u^\dagger) &\ge- \int\limits_{A_\eps} p^\dagger(u-u^\dagger) - \int\limits_{A\setminus A_\eps}p^\dagger(u-u^\dagger)\\
  & \ge \eps \,\|u-u^\dagger\|_{L^1(A_\eps)} - \eps \,\|u-u^\dagger\|_{L^1(A\setminus A_\eps)}.
  \end{split}\]
Using Assumption \ref{ass:ActiveSet} to estimate the measure of the set $A\setminus A_\eps$ we proceed with
\begin{align*}
  \eps \,\|u-u^\dagger\|_{L^1(A_\eps)} &- \eps \,\|u-u^\dagger\|_{L^1(A\setminus A_\eps)} \\
  &\ge \eps \,\|u-u^\dagger\|_{L^1(A)} - 2\,\eps \,\|u-u^\dagger\|_{L^1(A\setminus A_\eps)}\\
  &\ge \eps \,\|u-u^\dagger\|_{L^1(A)} - 2\, \eps \,\|u-u^\dagger\|_{L^\infty(A)}\, |A\setminus A_\eps|\\
  &\ge \eps \,\|u-u^\dagger\|_{L^1(A)} - c\, \eps^{\kappa+1},
\end{align*}
where $c>1$ is a constant independent of $u$.
In the last step, we used that the control bounds are given in $L^\infty(\Omega)$.
Setting $\eps:=c^{-2/\kappa}\|u-u^\dagger\|_{L^1(A)}^{1/\kappa}$ yields
\[
  (-p^\dagger, u-u^\dagger) \ge c \|u-u^\dagger\|_{L^1(A)}^{1+\frac1\kappa},
\]
which is the claim.
\end{proof}

The next step concerns the estimation of $(u^\dagger, u^\dagger - u_j)$ with the help of the source condition part of  \ref{ass:ActiveSet}.

\begin{lemma}\label{lem414}
Let $u^\dagger$ satisfy \ref{ass:ActiveSet}.
If $I \neq \Omega$ there is a constant $c>0$ such that for all $k$ it holds
\[
(u^\dagger, u^\dagger - u_k) \leq (S^\ast w, u^\dagger - u_k) + c\ \|u^\dagger - u_k\|_{L^1(A)}.
\]
\end{lemma}

\begin{proof}
Since $U_{ad}$ is defined by pointwise inequalities, the projection onto $U_{ad}$
can be taken pointwise.
This implies
$$\big(\chi_I(u^\dagger - S^\ast w), v-u^\dagger\big) \geq 0, \quad \forall v \in \Uad,$$
leading to
$$(\chi_I u^\dagger, u^\dagger - u_k) \leq ( \chi_I S^\ast w, u^\dagger - u_k ).$$
This gives
\begin{align*}
(u^\dagger, u^\dagger - u_k) &= (\chi_I u^\dagger + \chi_A u^\dagger, u^\dagger - u_k)\\
&\leq ( \chi_I S^\ast w + \chi_A u^\dagger, u^\dagger - u_k )\\
&= \big(S^\ast w, \chi_I(u^\dagger - u_k)\big) + (\chi_A u^\dagger, u^\dagger - u_k).
\end{align*}
Since $\chi_I = 1 - \chi_A$ we have
$$S \chi_I (u^\dagger - u_k) = S(1- \chi_A)(u^\dagger - u_k) = S(u^\dagger - u_k) - S \chi_A (u^\dagger - u_k).$$
Hence
\begin{align*}
(u^\dagger, u^\dagger - u_k) &\leq \big(w, S(u^\dagger - u_k) - S\chi_A (u^\dagger - u_k)\big) + \big(u^\dagger, \chi_A(u^\dagger - u_k)\big)\\
&= \big(w, S(u^\dagger - u_k)\big) + \big(u^\dagger - S^\ast w, \chi_A (u^\dagger - u_k)\big).
\end{align*}

Since on $A$ we have $p^\dagger \neq 0$ and $u^\dagger \in L^\infty(A)$, (recall $u_a,u_b \in L^\infty(A)$) so using the regularity assumption $S^\ast w \in L^\infty(\Omega)$ we can estimate
$$\big(u^\dagger - S^\ast w, \chi_A (u^\dagger - u_k)\big) \leq c \|u^\dagger - u_k\|_{L^1(A)},$$
which is the claim.
\end{proof}

We now have all the tools to prove strong convergence for the iterates of Algorithm \ref{alg:Min2}.

\begin{theorem}\label{thm:ASC_strong_convergence}
Let $u^\dagger$ satisfy Assumption \ref{ass:ActiveSet}. Then the iterates of Algorithm \ref{alg:Min2} satisfy
\begin{align*}
\|u^\dagger - u_k\|^2 &= \mathcal{O}\left( \gamma_k^{-1} + \gamma_k^{-1} \sum\limits_{j=1}^k \alpha_j^{-1} \gamma_j^{- \kappa}   \right),\\
\min\limits_{j=1,...,k} \|S(u^\dagger - u_j)\|_Y^2 &= \mathcal{O} \left(  \left( \sum\limits_{j=1}^k \frac{1}{\alpha_j^2} \right)^{-1} \left( 1 + \sum\limits_{j=1}^k \alpha_j^{-1} \gamma_j^{- \kappa}   \right)  \right),\\
\min\limits_{j=1,...,k} \|u^\dagger - u_j\|_{L^1(A)}^{1+ \frac{1}{\kappa}} &= \mathcal{O} \left(  \left( \sum\limits_{j=1}^k \frac{\gamma_j}{\alpha_j} \right)^{-1} \left( 1 + \sum\limits_{j=1}^k \alpha_j^{-1} \gamma_j^{- \kappa}   \right)  \right).
\end{align*}
\end{theorem}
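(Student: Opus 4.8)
The plan is to sum the fundamental inequality \eqref{eq308} of Lemma \ref{lemma:technique_proof_start} over $k$, exactly as in the proofs of Theorem \ref{thm:unique_weak_conv} and Theorem \ref{thm:SC_strong_conv}, but now replacing the two scalar-product terms on its right-hand side by the sharper estimates available under \ref{ass:ActiveSet}. First I would invoke Lemma \ref{lem414} to bound the critical term $\frac{1}{\alpha_k}(u^\dagger,u^\dagger-u_k)$ by $\frac{1}{\alpha_k}(S^\ast w,u^\dagger-u_k)+\frac{c}{\alpha_k}\|u^\dagger-u_k\|_{L^1(A)}$. Since $v_k-v_{k-1}=\frac{1}{\alpha_k}S(u^\dagger-u_k)$, the first summand equals $(w,v_k-v_{k-1})_Y$, which combines with the quadratic $v$-terms in \eqref{eq308} to yield, after completing the square, the telescoping pair $\frac{1}{2}\|v_{k-1}-w\|_Y^2-\frac{1}{2}\|v_k-w\|_Y^2$, precisely as in Theorem \ref{thm:SC_strong_conv}. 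Simultaneously I would estimate the non-positive term $\frac{\gamma_k}{\alpha_k}(p^\dagger,u_k-u^\dagger)$ from above by Lemma \ref{lem31}, giving the genuinely negative contribution $-\frac{\gamma_k}{\alpha_k}c_A\|u_k-u^\dagger\|_{L^1(A)}^{1+1/\kappa}$, which I move to the left-hand side.

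At this stage the inequality reads, schematically,
\[
\frac{1}{\alpha_k}D^{\lambda_k}(u^\dagger,u_k)+\frac{1}{2\alpha_k^2}\|S(u^\dagger-u_k)\|_Y^2+\frac{c_A\gamma_k}{\alpha_k}\|u_k-u^\dagger\|_{L^1(A)}^{1+\frac1\kappa}+\tfrac12\|v_k-w\|_Y^2\le\frac{c}{\alpha_k}\|u^\dagger-u_k\|_{L^1(A)}+\tfrac12\|v_{k-1}-w\|_Y^2,
\]
and the main obstacle is to control the surviving linear term $\frac{c}{\alpha_k}\|u^\dagger-u_k\|_{L^1(A)}$ on the right. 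The key step is a weighted Young inequality with the conjugate exponents $p=1+\frac1\kappa$ and $q=\kappa+1$: regarding $\|u^\dagger-u_k\|_{L^1(A)}$ as its own product with $1$, I would choose the Young parameter proportional to $\gamma_k$ so that the resulting $p$-th power term is absorbed into half of $\frac{c_A\gamma_k}{\alpha_k}\|u_k-u^\dagger\|_{L^1(A)}^{1+1/\kappa}$ on the left. Because the dual exponent enters the remainder through the scaling $q/p=\kappa$, the leftover is of order $\frac{1}{\alpha_k}\gamma_k^{-\kappa}$. This is exactly where the factor $\gamma_j^{-\kappa}$ in all three claimed rates originates, and getting this scaling right is the heart of the argument; the quantitative active-set bound of Lemma \ref{lem31} supplies the $L^1(A)$-coercivity that makes the absorption possible.

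It then remains to sum the resulting inequality for $k=1,\dots,N$. The terms $\frac12\|v_k-w\|_Y^2$ telescope, leaving $+\frac12\|v_N-w\|_Y^2$ on the left and $\frac12\|v_0-w\|_Y^2=\frac12\|w\|_Y^2$ on the right (recall $v_0=0$); discarding the nonnegative $\frac12\|v_N-w\|_Y^2$ gives
\[
\sum_{k=1}^N\frac{1}{\alpha_k}D^{\lambda_k}(u^\dagger,u_k)+\sum_{k=1}^N\frac{1}{2\alpha_k^2}\|S(u^\dagger-u_k)\|_Y^2+\frac{c_A}{2}\sum_{k=1}^N\frac{\gamma_k}{\alpha_k}\|u_k-u^\dagger\|_{L^1(A)}^{1+\frac1\kappa}\le\tfrac12\|w\|_Y^2+c\sum_{k=1}^N\frac{1}{\alpha_k}\gamma_k^{-\kappa}.
\]
From here the three estimates follow by the bookkeeping already used in Theorems \ref{thm:unique_weak_conv} and \ref{thm:SC_strong_conv}. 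For the first, I use the monotonicity of $D^{\lambda_k}(u^\dagger,u_k)$ from Theorem \ref{thm:H_convergence} together with \eqref{eq202} to bound the first sum below by $\gamma_N\cdot\frac12\|u^\dagger-u_N\|^2$, which yields the $\gamma_N^{-1}\bigl(1+\sum_j\alpha_j^{-1}\gamma_j^{-\kappa}\bigr)$ rate. For the remaining two I bound each of the second and third sums below by its minimal summand times the corresponding weight-sums $\sum_k\alpha_k^{-2}$ and $\sum_k\gamma_k/\alpha_k$, and divide through; the right-hand side $\frac12\|w\|_Y^2+c\sum_k\alpha_k^{-1}\gamma_k^{-\kappa}$ is the common factor $1+\sum_j\alpha_j^{-1}\gamma_j^{-\kappa}$ that appears in all three claims.
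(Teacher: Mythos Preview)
Your proposal is correct and follows the paper's proof essentially step for step: the combination of Lemma~\ref{lemma:technique_proof_start} with Lemmas~\ref{lem31} and~\ref{lem414}, the identification $(S^\ast w,u^\dagger-u_k)/\alpha_k=(w,v_k-v_{k-1})_Y$ and the resulting telescoping of $\frac12\|v_k-w\|_Y^2$, the weighted Young inequality with exponents $1+\frac1\kappa$ and $\kappa+1$ producing the remainder $c\,\alpha_k^{-1}\gamma_k^{-\kappa}$, and the final summation using the monotonicity of $D^{\lambda_k}(u^\dagger,u_k)$ and \eqref{eq202} are exactly the paper's argument.
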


\begin{proof}
Using the results of Lemmas \ref{lemma:technique_proof_start}, \ref{lem31}, and  \ref{lem414} we obtain
\begin{multline*}
\frac{1}{\alpha_k} D^{\lambda_k}(u^\dagger , u_k) + \frac{1}{2\,\alpha_k^2}\|S(u^\dagger - u_k)\|_Y^2 + \frac{1}{2}\|v_{k}\|_Y^2-  \frac{1}{2}\|v_{k-1}\|_Y^2\\
\begin{aligned}
&\leq \frac{1}{\alpha_k}(u^\dagger, u^\dagger - u_k) + \frac{\gamma_k}{\alpha_k} ( p^\dagger ,  u_k - u^\dagger )\\
&\le  \frac{1}{\alpha_k}(S^\ast w, u^\dagger - u_k) + \frac{c}{\alpha_k}\ \|u^\dagger - u_k\|_{L^1(A)} - \frac{c_A \gamma_k}{\alpha_k} \|u^\dagger - u_k\|_{L^1(A)}^{1 + \frac{1}{\kappa}}\\
&\le  ( w, v_k-v_{k-1}) + \frac{c}{\alpha_k}\ \|u^\dagger - u_k\|_{L^1(A)} - \frac{c_A \gamma_k}{\alpha_k} \|u^\dagger - u_k\|_{L^1(A)}^{1 + \frac{1}{\kappa}}.
\end{aligned}
\end{multline*}
By Young's inequality, we find
\[
 \frac{c}{\alpha_k}\ \|u^\dagger - u_k\|_{L^1(A)} \le \frac{c_A \gamma_k}{2\,\alpha_k} \|u^\dagger - u_k\|_{L^1(A)}^{1 + \frac{1}{\kappa}}
 +c\, \frac{\gamma_k^{-\kappa}}{\alpha_k}.
\]
This implies the estimate
\begin{multline*}
\frac{1}{\alpha_k} D^{\lambda_k}(u^\dagger , u_k) + \frac{1}{2\,\alpha_k^2}\|S(u^\dagger - u_k)\|_Y^2 + \frac{c_A \gamma_k}{2\,\alpha_k} \|u^\dagger - u_k\|_{L^1(A)}^{1 + \frac{1}{\kappa}} + \frac{1}{2}\|v_{k}-w\|_Y^2\\
\le  \frac{1}{2}\|v_{k-1}-w\|_Y^2+c\, \frac{\gamma_k^{-\kappa}}{\alpha_k}.
\end{multline*}
Summation of this inequality together with the monotonicity of the Bregman distance gives
\begin{align*}
\sum\limits_{j=1}^k \frac{1}{\alpha_j^2}\|S(u^\dagger - u_j)\|_Y^2 &+ \sum\limits_{j=1}^k \frac{\gamma_j}{\alpha_j} \|u^\dagger - u_j\|_{L^1(A)}^{1 + \frac{1}{\kappa}}\\
&+ \gamma_k  D^{\lambda_k} (u^\dagger , u_k) + \|v_k - w\|_Y^2  \leq c \left( 1 + \sum\limits_{j=1}^k \alpha_j^{-1} \gamma_j^{-\kappa} \right).
\end{align*}
The claim now follows using the lower bound \eqref{eq202}.
\end{proof}

If assumption \ref{ass:ActiveSet} is satisfied with $A=\Omega$, which implies that $u^\dagger$ is bang-bang on $\Omega$, or $w=0$,
then the estimate of Theorem \ref{thm:ASC_strong_convergence} can be improved to
\[
 \|u^\dagger - u_k\|^2 \le c\  \gamma_k^{-1} \sum\limits_{j=1}^k \alpha_j^{-1} \gamma_j^{-\kappa}.
\]

Similar to Corollary \ref{coro410} we can prove convergence of the weighted average $\gamma_k^{-1}\lambda_k$.
\begin{corollary}\label{cor:ASClambdaestimate}
Let $u^\dagger$ satisfy \ref{ass:ActiveSet}.
Then it holds
\[
\left\|\gamma_k^{-1}\sum_{i=1}^k \frac{1}{\alpha_i} S(u_i-u^\dagger)\right\|_Y^2 +
\left\|\gamma_k^{-1} \lambda_k - p^\dagger \right\|^2 = \mathcal O\left(\gamma_k^{-2}\left( 1   +  \sum_{j=1}^k \alpha_j^{-1} \gamma_j^{-\kappa}\right)\right) .
\]
\end{corollary}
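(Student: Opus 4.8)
The plan is to mirror the proof of Corollary \ref{coro410}, replacing the uniform bound on $\|v_k-w\|_Y$ that was available under the source condition by the weaker, $k$-dependent bound that \ref{ass:ActiveSet} provides. First I would reproduce the algebraic identity already established in Corollary \ref{coro410}. Using $\lambda_k = S^\ast\mu_k$ with $\mu_k = \sum_{i=1}^k \alpha_i^{-1}(z-Su_i)$, the definition $p^\dagger = S^\ast(z-Su^\dagger)$, and the normalization $\gamma_k^{-1}\sum_{i=1}^k\alpha_i^{-1}=1$, one obtains
\[
\gamma_k^{-1}\lambda_k - p^\dagger = \gamma_k^{-1}S^\ast\sum_{i=1}^k \frac{1}{\alpha_i}S(u^\dagger-u_i) = \gamma_k^{-1} S^\ast v_k.
\]

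Next I would observe that both summands of the claim are controlled by $\|v_k\|_Y^2$. Since $v_k = \sum_{i=1}^k \alpha_i^{-1} S(u^\dagger - u_i)$ differs from $\sum_{i=1}^k \alpha_i^{-1} S(u_i - u^\dagger)$ only by a sign, the first summand equals exactly $\gamma_k^{-2}\|v_k\|_Y^2$, while the identity above together with the continuity of $S^\ast$ bounds the second by $c\,\gamma_k^{-2}\|v_k\|_Y^2$. Hence the whole left-hand side is $\mathcal{O}\big(\gamma_k^{-2}\|v_k\|_Y^2\big)$, and the entire matter reduces to estimating $\|v_k\|_Y^2$.

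The required growth bound on $\|v_k\|_Y^2$ is already a by-product of the proof of Theorem \ref{thm:ASC_strong_convergence}. The summed inequality derived there, after discarding the other nonnegative terms on its left-hand side, yields
\[
\|v_k - w\|_Y^2 \le c\Big(1 + \sum_{j=1}^k \alpha_j^{-1}\gamma_j^{-\kappa}\Big).
\]
Applying the triangle inequality $\|v_k\|_Y \le \|v_k - w\|_Y + \|w\|_Y$ and absorbing the fixed constant $\|w\|_Y^2$ into the leading term $1$ (legitimate since $w$ is independent of $k$), I would conclude $\|v_k\|_Y^2 \le c\big(1 + \sum_{j=1}^k \alpha_j^{-1}\gamma_j^{-\kappa}\big)$. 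Multiplying by $\gamma_k^{-2}$ then gives precisely the asserted rate.

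I do not expect any genuine obstacle: the only substantive ingredient, the growth estimate for $\|v_k-w\|_Y^2$, is exactly what the convergence analysis of Theorem \ref{thm:ASC_strong_convergence} delivers, so the corollary is essentially bookkeeping. The single point deserving minor care is ensuring that the constant coming from the triangle inequality is correctly subsumed into the $1$ on the right-hand side, so that the final bound has the stated homogeneous form.
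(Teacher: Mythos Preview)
Your proposal is correct and matches the paper's approach almost verbatim: the paper's proof simply invokes the bound $\|v_k\|_Y^2 \le c(\|v_k-w\|_Y^2 + \|w\|_Y^2) \le c\big(1+\sum_{j=1}^k \alpha_j^{-1}\gamma_j^{-\kappa}\big)$ from Theorem~\ref{thm:ASC_strong_convergence} and then refers back to the identity and estimate of Corollary~\ref{coro410}, which is precisely what you have spelled out.
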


\begin{proof}
Following the lines of theorem \ref{thm:ASC_strong_convergence} we obtain
\[
\|v_k\|_Y^2
\leq c(\|v_k - w\|_Y^2 + \|w\|_Y^2) \leq   c\left( 1   +  \sum_{j=1}^k \alpha_j^{-1} \gamma_j^{-\kappa}\right).
\]
The claim follows with the same arguments as in Corollary \ref{coro410}.
\end{proof}

Let us derive precise convergence rates, if $\alpha_k$ is a polynomial in $k$.
\begin{corollary}\label{coro416}
Let $u^\dagger$ satisfy \ref{ass:ActiveSet}.
Suppose that $\alpha_k$ is given by $\alpha_k = c_\alpha k^{-s}$ with $s\ge0$, $c_\alpha>0$.
Then it holds
\begin{multline*}
  k^{s+1} \|u^\dagger - u_k\|^2
  + k^{2(s+1)}\min\limits_{j=1,...,k} \|u^\dagger - u_j\|_{L^1(A)}^{1+\frac1\kappa}\\
  + k^{2s+1}  \min\limits_{j=1,...,k} \|S(u^\dagger - u_j)\|_Y^2
  +k^{2(s+1)} \|\gamma_k^{-1} \lambda_k - p^\dagger\|_Y^2
\\
\leq c \begin{cases} k^{(s+1)(1 - \kappa)} & \text{ if } \kappa <1,\\
 \log(k) & \text{ if } \kappa =1,\\
 1 & \text{ if } \kappa > 1.
 \end{cases}
\end{multline*}
\end{corollary}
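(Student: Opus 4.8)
The plan is to treat this corollary as a purely computational consequence of the bounds already established in Theorem~\ref{thm:ASC_strong_convergence} and Corollary~\ref{cor:ASClambdaestimate}: once the specific choice $\alpha_k = c_\alpha k^{-s}$ is substituted, every quantity appearing in those bounds becomes a power sum whose asymptotic growth I can read off from the elementary estimates $\sum_{j=1}^k j^p = \Theta(k^{p+1})$ for $p>-1$, $\sum_{j=1}^k j^{-1} = \Theta(\log k)$, and $\sum_{j=1}^k j^p = \Theta(1)$ for $p<-1$. So the structure of the proof is: first record the growth of the auxiliary sums, then observe that the explicit prefactors $k^{s+1}$, $k^{2s+1}$, $k^{2(s+1)}$ in the statement are precisely reciprocal to the decaying weights in Theorem~\ref{thm:ASC_strong_convergence}, and finally resolve the one genuinely case-dependent sum to obtain the threefold right-hand side.

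First I would compute, using the power-sum estimate and $\gamma_k = c_\alpha^{-1}\sum_{j=1}^k j^s$,
\[
\gamma_k = \Theta(k^{s+1}), \qquad \sum_{j=1}^k \alpha_j^{-2} = \Theta(k^{2s+1}), \qquad \sum_{j=1}^k \frac{\gamma_j}{\alpha_j} = \Theta(k^{2(s+1)}).
\]
Writing $\Phi_k := 1 + \sum_{j=1}^k \alpha_j^{-1}\gamma_j^{-\kappa}$ for the common second factor appearing in all three bounds of Theorem~\ref{thm:ASC_strong_convergence} and in Corollary~\ref{cor:ASClambdaestimate}, these identities show that the decaying weights $\gamma_k^{-1}$, $(\sum_j \gamma_j/\alpha_j)^{-1}$, $(\sum_j \alpha_j^{-2})^{-1}$ and $\gamma_k^{-2}$ are exactly of order $k^{-(s+1)}$, $k^{-2(s+1)}$, $k^{-(2s+1)}$ and $k^{-2(s+1)}$ respectively. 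Hence each of the four terms on the left collapses, after multiplication by its prefactor, to a constant multiple of $\Phi_k$, and it only remains to determine the growth of $\Phi_k$.

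The one step that requires care is the analysis of $\Phi_k$. The summand satisfies $\alpha_j^{-1}\gamma_j^{-\kappa} = \Theta(j^{\,s-\kappa(s+1)})$, and the relevant observation is the identity
\[
s-\kappa(s+1)+1 = (s+1)(1-\kappa),
\]
so that the exponent of the summand compares with $-1$ precisely according to the sign of $1-\kappa$ (using $s+1>0$). For $\kappa<1$ the sum grows like $k^{(s+1)(1-\kappa)}$ and dominates the additive constant $1$; for $\kappa=1$ the exponent is exactly $-1$ and the sum is $\Theta(\log k)$, again dominating; for $\kappa>1$ the series converges and $\Phi_k=\Theta(1)$. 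Substituting these three regimes for $\Phi_k$ into the collapsed bounds yields exactly the claimed case distinction for $\|u^\dagger-u_k\|^2$, $\min_j\|u^\dagger-u_j\|_{L^1(A)}^{1+1/\kappa}$, $\min_j\|S(u^\dagger-u_j)\|_Y^2$ and $\|\gamma_k^{-1}\lambda_k-p^\dagger\|_Y^2$. There is no conceptual obstacle here; the only thing to watch is the threshold bookkeeping at $\kappa=1$ and confirming that in the cases $\kappa\le 1$ it is the sum, not the constant term, that governs $\Phi_k$.
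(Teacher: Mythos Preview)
Your proposal is correct and follows essentially the same approach as the paper's own proof: both reduce the corollary to inserting the asymptotics $\gamma_k=\Theta(k^{s+1})$, $\sum_j\alpha_j^{-2}=\Theta(k^{2s+1})$, $\sum_j\alpha_j^{-1}\gamma_j=\Theta(k^{2(s+1)})$ into Theorem~\ref{thm:ASC_strong_convergence} and Corollary~\ref{cor:ASClambdaestimate}, and then handle the remaining sum $\sum_j\alpha_j^{-1}\gamma_j^{-\kappa}$ via the power-sum estimate with the exponent identity $s-\kappa(s+1)+1=(s+1)(1-\kappa)$ to obtain the three cases. Your packaging via $\Phi_k$ is a slight presentational convenience, but there is no substantive difference.
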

\begin{proof}
For this choice of $\alpha_k$, it is easy to see that $\gamma_k^{-1} \leq c k^{-(s+1)}$. Then $\alpha_j^{-1} \gamma_j^{-\kappa} \leq c j^{s-(s+1)\kappa}$ which implies that $\sum_{j=1}^k \alpha_j^{-1} \gamma_j^{-\kappa} \leq c k^{(s+1)(1-\kappa)}$ if $\kappa\ne1$ and otherwise $\sum_{j=1}^k \alpha_j^{-1} \gamma_j^{-\kappa} \leq c \log(k)$ if $\kappa=1$.
If $\kappa\le 1$ then the term $\sum_{j=1}^k \alpha_j^{-1} \gamma_j^{-\kappa}$
is dominating the error estimate, while for $\kappa>1$ this term tends to zero.\\

This yields
\[
\|u^\dagger - u_k\|^2 \leq c\, \gamma_k^{-1} \left(1  +  \sum\limits_{j=1}^k \alpha_j^{-1} \gamma_j^{- \kappa}   \right)
\leq  c\, k^{-(s+1)}  s_k
 \]
with
\[
 s_k := \begin{cases} k^{(s+1)(1 - \kappa)} & \text{ if } \kappa <1,\\
 \log(k) & \text{ if } \kappa =1,\\
 1 & \text{ if } \kappa > 1.
 \end{cases}
\]
Using $\sum_{j=1}^k \alpha_j^{-1}\gamma_j \geq c k^{2(s+1)}$ and $\sum_{j=1}^k \alpha_j^{-2} \geq c k^{2s+1}$, we obtain the estimates
\begin{align*}
 \min\limits_{j=1,...,k} \|u^\dagger - u_j\|_{L^1(A)}^{1+\frac1\kappa}
 &\le c \left(\sum_{j=1}^k \alpha_j^{-1}\gamma_j\right)^{-1}\left( 1+  \sum\limits_{j=1}^k \alpha_j^{-1} \gamma_j^{-\kappa}  \right)\\
 &\leq c k^{-2(s+1)} s_k
\end{align*}
and
\begin{align*}
 \min\limits_{j=1,...,k} \|S(u^\dagger - u_j)\|_Y^2
 &\le c \left(\sum_{j=1}^k \alpha_j^{-2}\right)^{-1}\left( 1+  \sum\limits_{j=1}^k \alpha_j^{-1} \gamma_j^{-\kappa}  \right)\\
 &\leq c k^{-(2s+1)} s_k.
\end{align*}
Similar we obtain with Corollary \ref{cor:ASClambdaestimate}
\begin{align*}
 \|\gamma_k^{-1} \lambda_k - p^\dagger\|_Y^2
 &\le c \gamma_k^{-2}\left( 1+  \sum\limits_{j=1}^k \alpha_j^{-1} \gamma_j^{-\kappa}  \right)\\
 &\leq c k^{-2(s+1)} s_k.
\end{align*}
Combining these 4 inequalities yields the claim.
\end{proof}

\section*{Funding}
This work was supported by DFG under grant number Wa 3626/1-1.

\bibliographystyle{plain}
\bibliography{literatur}

\end{document}